\newtheorem{theorem}{Theorem}[section]
\newtheorem{lemma}[theorem]{Lemma}
\newtheorem{corollary}[theorem]{Corollary}
\theoremstyle{definition}
\newtheorem{definition}[theorem]{Definition}
\theoremstyle{remark}
\newtheorem{remark}[theorem]{Remark}
\numberwithin{equation}{section}
\begin{document}

\title{Approximations in $L_{p}$-norms and Besov spaces on compact manifolds}

\author{Isaac Z. Pesenson }\footnote{ Department of Mathematics, Temple University,
 Philadelphia,
PA 19122; pesenson@temple.edu }

\subjclass[2000]{Primary 43A85; 42C40; 41A17;
 41A10 }

\begin{abstract}
The  objective of the paper is to describe Besov spaces  on general compact Riemannian manifolds in terms of the best approximation by eigenfunctions of elliptic differential operators. 
\end{abstract}

\maketitle

\section{Introduction}

Approximation theory and its relations to function spaces on Riemannian manifolds is an old subject which still attracts  attention of mathematicians \cite{brdai}, \cite{FM1}, \cite{gpa}, \cite{NRT}, \cite{Pes1}.

The goal  of the paper is to give an alternative proof of a theorem in \cite{gp} (see Theorem \ref{ApproxTh}  below ) which  characterizes  functions in  Besov spaces on compact Riemannian manifolds by best approximations by eigenfunctions of elliptic differential operators. This theorem was an important ingredient of a construction which led to a descriptions of Besov spaces in terms of bandlimited localized frames (see \cite{gp} for details). 

Let $\mathbf{M}$, dim$\>\mathbf{M}=n$, be a connected compact Riemannian
manifold without boundary and the corresponding space $L_{p}({\bf M}), 1\leq p\leq\infty,$ is constructed by using Riemannian measure.  
To define Sobolev spaces,  we fix a  covering  $\{B(y_{\nu}, r_{0})\}$ of $\bold{M}$ of finite
multiplicity by balls $B(y_{\nu}, r_{0})$ centered at $y_{\nu}\in \bold{M}$ of radius
$r_{0}< \rho_{\bold{M}}$, where  $\rho_{\bold{M}}$ is the injectivity radius of the manifold.
For a  fixed partition of unity
$\Psi=\{\psi_{\nu}\}$ subordinate to this covering  the Sobolev
spaces $W^{k}_{p}(\bold{M}), k\in \mathbb{N}, 1\leq p<\infty,$ are
introduced as the completion of $C^{\infty}(\bold{M})$ with respect
to the norm
\begin{equation}
\|f\|_{W^{k}_{p}(\bold{M})}=\left(\sum_{\nu}\|\psi_{\nu}f\|^{p}
_{W^{k}_{p}(\mathbf{R}^{n})}\right) ^{1/p}.\label{Sobnorm}
\end{equation}

Similarly,

\begin{equation}
\|f\|_{B^{\alpha}_{p,q}(\bold{M})}=\left(\sum_{\nu}\|\psi_{\nu}f\|^{p}
_{B^{\alpha}_{p,q}(\mathbf{R}^{n})}\right) ^{1/p},\label{Besnorm}
\end{equation}
where $B^{\alpha}_{p,q}(\mathbf{R}^{n})$ is the Besov space $B^{\alpha}_{p,q}(\bold{M}), \alpha>0, 1\leq p<\infty, 0\leq q\leq \infty.$
It is known \cite{Tr} that such defined spaces are independent on the choice of a partition of unity.

Let $L$ be a  second order elliptic differential operator on ${\bf M}$ with smooth coefficients. By duality such an operator can be extended  to the space of distributions on ${\bf M}$. This extension, with domain consisting of all $f\in L_{2}({\bf M})$ for which $Lf\in L_{2}({\bf M})$ defines an operator in the space $L_{2}({\bf M})$. 
We will assume that this operator is self-adjoint and non-negative.

One can consider the positive square root $L^{1/2}$ and by duality extend it to the space of distributions on ${\bf M}$. The corresponding operator $L^{k/2},\>k\in \mathbb{N},$ in a space  $L_{p}({\bf M}), 1\leq p\leq\infty, $ is defined on the set of all distributions $f\in L_{p}({\bf M})$ for which $L^{k/2}f\in L_{p}({\bf M})$. It is known that the domain of  the operator $L^{k/2}$  is exactly the Sobolev space $W^{k}_{p}(\bold{M})$. 

In  every space $L_{p}({\bf M}), \>1\leq p\leq \infty, $ such defined operators (which will  all be denoted as $L$) have the same spectrum $0=\lambda_{0}<\lambda_{1}\leq \lambda_{2}\leq ...$ and the same set of eigenfunctions.
 Let
$u_{0}, u_{1}, u_{2}, ...$ be a corresponding complete set of eigenfunctions which are orthonormal in the space $L_{p}({\bf M})$.   

The notation 
$\mathbf{E}_{\omega}(L),\ \omega>0,$ will be used for the span of all
eigenfunctions of $L$, whose corresponding eigenvalues
are not greater than $\omega$.

For $1 \leq p \leq \infty$, if $f \in L_p({\bf M})$, we let
\begin{equation}
\mathcal{E}(f,\omega,p)=\inf_{g\in
\textbf{E}_{\omega}(L)}\|f-g\|_{L_{p}({\bf M})}.
\end{equation}

The following theorem was proved in \cite{gp}.
\begin{theorem}
\label{ApproxTh}
If  $\alpha  > 0$, $1 \leq p \leq \infty$, and $0 < q \leq  \infty$ then
$f \in B^{\alpha }_{p, q}({\bf M})$ if and only if $f \in L_p(\bold{M})$ and
\begin{equation}
\label{errgdpq}
\|f\|_{\mathcal{A}^{\alpha}_{ p, q}({\bf M})} := 
\|f\|_{L_p(\bold{M})} + \left(\sum_{j=0}^{\infty} (2^{\alpha j}{\mathcal E}(f, 2^{2j},p))^q \right)^{1/q} < \infty.
\end{equation}
Moreover,
\begin{equation}
\label{errnrmeqway}
\|f\|_{\mathcal{A}^{\alpha }_{p, q}({\bf M})} \sim \|f\|_{B^{\alpha }_{p, q}({\bf M})}.
\end{equation}
\end{theorem}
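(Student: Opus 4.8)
The plan is to establish the equivalence \eqref{errnrmeqway} via the standard machinery of approximation theory, proving two inequalities that together pin down the Besov norm through best approximation by eigenfunctions. The characterization rests on two complementary estimates: a \emph{Jackson-type} (direct) inequality, which bounds the approximation error $\mathcal{E}(f,\omega,p)$ in terms of smoothness, and a \emph{Bernstein-type} (inverse) inequality, which controls derivatives of bandlimited functions $g \in \mathbf{E}_\omega(L)$ by powers of $\omega$ times $\|g\|_{L_p(\mathbf{M})}$. The dyadic scaling $\omega = 2^{2j}$ in \eqref{errgdpq} is dictated precisely by the second-order nature of $L$: since $L^{k/2}$ has domain $W^k_p(\mathbf{M})$, the natural frequency associated to a Sobolev order $k$ is $\lambda \sim$ (frequency)$^2$, so that $\mathbf{E}_{2^{2j}}(L)$ corresponds to functions of ``smoothness scale'' $2^j$.

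First I would fix the smoothness parameter by choosing an integer $k > \alpha$ and working with the $K$-functional
\begin{equation}
K(f,t,p,k) = \inf_{h \in W^k_p(\mathbf{M})} \left( \|f-h\|_{L_p(\mathbf{M})} + t^k \|h\|_{W^k_p(\mathbf{M})} \right),
\end{equation}
which is the standard real-interpolation object whose weighted $L_q$-norm over dyadic $t = 2^{-j}$ is equivalent to the Besov norm $\|f\|_{B^\alpha_{p,q}(\mathbf{M})}$; this reduction uses only the definition \eqref{Besnorm} together with the classical interpolation description of Besov spaces on $\mathbf{R}^n$ transplanted via the partition of unity. The Jackson inequality would then take the form $\mathcal{E}(f,\omega,p) \lesssim \omega^{-k/2} \|f\|_{W^k_p(\mathbf{M})}$, obtained by projecting $f$ onto $\mathbf{E}_\omega(L)$ using the spectral resolution of $L$ and estimating the tail $\sum_{\lambda_m > \omega}$ through the identity $\|L^{k/2}f\|_{L_p} \sim \|f\|_{W^k_p}$. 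The Bernstein inequality, conversely, reads $\|L^{k/2}g\|_{L_p(\mathbf{M})} \lesssim \omega^{k/2}\|g\|_{L_p(\mathbf{M})}$ for $g \in \mathbf{E}_\omega(L)$, which is immediate from the spectral support of $g$.

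With these two inequalities in hand, the two directions of \eqref{errnrmeqway} follow by now-routine telescoping arguments. For the bound $\|f\|_{\mathcal{A}^\alpha_{p,q}} \lesssim \|f\|_{B^\alpha_{p,q}}$, I would take near-optimal elements $h_j$ realizing $K(f,2^{-j})$, feed them into the Jackson estimate, and sum; for the reverse bound I would build a sequence of near-best approximants $g_j \in \mathbf{E}_{2^{2j}}(L)$, form the telescoping differences $g_{j+1}-g_j$, apply the Bernstein inequality to each difference, and reassemble the $K$-functional from the partial sums. The summation over the geometric scale $2^{\alpha j}$ converts pointwise-in-$j$ estimates into the $\ell_q$ bounds appearing in \eqref{errgdpq}, and the case $q=\infty$ is handled by replacing sums with suprema throughout.

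The main obstacle I anticipate is proving the Jackson and Bernstein inequalities in the full $L_p$ range $1 \le p \le \infty$ rather than merely $p=2$, where everything is transparent from orthonormality and Parseval. For $p \neq 2$ the spectral projector onto $\mathbf{E}_\omega(L)$ is not a bounded operator on $L_p$ with norm uniform in $\omega$ without additional work, so I expect the genuinely technical step to be constructing $L_p$-bounded smooth spectral cutoffs — replacing the sharp projector by a function $\varphi(L/\omega)$ with $\varphi$ a smooth compactly supported multiplier — and controlling the resulting operators uniformly in $\omega$. This is where the elliptic structure of $L$, its heat kernel bounds, and finite-propagation-speed or Gaussian-decay estimates for functions of $L$ must be invoked; once these $L_p$ bounds on spectral multipliers are secured, the remaining approximation-theoretic bookkeeping is standard.
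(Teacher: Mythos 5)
Your outline is essentially correct, but it follows a genuinely different route from the one this paper takes. You propose the classical direct/inverse approximation scheme: prove Jackson and Bernstein inequalities, then run the telescoping argument by hand --- near-optimal realizers $h_j$ of the $K$-functional fed into Jackson for one direction, telescoped near-best approximants $g_{j+1}-g_j$ with Bernstein and a discrete Hardy-type inequality for the other. The paper deliberately avoids that bookkeeping: its stated purpose is to give an alternative to exactly the kind of argument you describe, by verifying the same two inequalities (Lemmas \ref{Jack} and \ref{BernLemma}) and then invoking the abstract machinery of Section \ref{interp} --- the Reiteration Theorem, the Power Theorem, and the equivalence of interpolation and approximation spaces packaged in Theorem \ref{A-I-equivalence} --- together with the identification $B^{\alpha}_{p,q}(\mathbf{M})=(L_{p}(\mathbf{M}),W^{r}_{p}(\mathbf{M}))^{K}_{\alpha/r,q}$. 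What your approach buys is self-containedness and transparency; its cost is that the telescoping and the Hardy inequality must be carried out carefully in the quasi-normed setting when $0<q<1$, a point your sketch glosses over with ``routine.'' What the paper's approach buys is that all of that is absorbed into general theorems, and one obtains the continuous-parameter characterization (Theorem \ref{contapproxim}) directly, with the dyadic form (\ref{errgdpq}) following by discretization. You correctly identify the real technical core common to both routes --- uniform $L_p$-boundedness of smooth spectral cutoffs $\varphi(\omega^{-1}L)$, which the paper secures through the kernel estimate of Theorem \ref{nrdglc}, Corollary \ref{Kerbound}, and Young's inequality --- and you correctly read off the $\omega=2^{2j}$ normalization from the second-order nature of $L$; note only that the sharp spectral projector you mention in passing must be replaced throughout by such smooth cutoffs, and that the Jackson bound is obtained not from a single cutoff but from a dyadic Littlewood--Paley decomposition $f-h(4\omega^{-1}L)f=\sum_{j\ge 1}F_j(4\omega^{-1}L)f$ with $F_j(\lambda)$ rewritten as a bounded multiplier times $\lambda^{k/2}$, as in the proof of Lemma \ref{Jack}.
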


Our objective is to give a proof of this Theorem which is different from the proof presented in \cite{gp}. The new proof relies  on powerful tools of the theory of interpolation of linear operators.

\section{Kernels and Littlewood-Paley decomposition on compact Riemannian manifolds}

Assuming  $F \in C_{0}^{\infty}(\bf{R})$ and using the spectral theorem,
one can define the bounded operator $F(t^{2}L)$ on $L_2({\bf M})$.  In fact, for $f \in L_2({\bf M})$,
\begin{equation}
\label{ft2F}
[F(t^{2}L)f](x) = \int K^{F}_t(x,y) f(y) dy,
\end{equation}
where 
\begin{equation}
\label{expout}
K^{F}_t(x,y) = \sum_l F(t^2\lambda_l)u_l(x)u_l(y).
\end{equation}
 We call $K^{F}_t$ the kernel of $F(t^2L)$.  $F(t^2L)$ maps $C^{\infty}({\bf M})$
to itself continuously, and may thus be extended to be a map on distributions.  In particular
we may apply $F(t^2L)$ to any $f \in L_p({\bf M}) \subseteq L_1({\bf M})$ (where $1 \leq p \leq \infty$), and by Fubini's theorem
$F(t^2L)f$ is still given by (\ref{ft2F}).  

The following Theorem  about $K^{F}_t$ was proved in \cite{gp}  for general elliptic second order differential self-adjoint positive operators.
\begin{theorem}
\label{nrdglc}
 Assume $F\in C_{0}^{\infty}(\bf{R})$  and  let $K^{F}_t(x,y)$ be the kernel of $F(t^{2}L)$.  
Then for any $N>n$ there exists  $C(F,N) > 0$ such that
\begin{equation}
\label{kersize}
|K^{F}_t(x,y)| \leq \frac{C(F, N)}{t^{n}\left[ 1+\frac{d(x,y)}{t} \right]^{N}},\>\>\>n=dim\>{\bf M},
\end{equation}
for $0<t\leq 1$ and all $x,y \in {\bf M}$. 
The constant $C(F, N)$ depends on the norm of $F$  in the space $C^{N}(\bf{R})$. 

\end{theorem}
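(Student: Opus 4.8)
The plan is to exploit the finite propagation speed of the wave equation associated with $L$ together with the rapid decay of the Fourier transform of a compactly supported smooth function. First I would reduce to an even multiplier: setting $g(\xi)=F(\xi^{2})$, the function $g$ lies in $C_{0}^{\infty}(\mathbf{R})$, is even, and satisfies $F(t^{2}L)=g(t\sqrt{L})$. Writing $g$ through the Fourier inversion formula and using that $g$ is even gives the representation
\begin{equation}
g(t\sqrt{L})=\frac{1}{2\pi t}\int_{\mathbf{R}}\widehat{g}\!\left(\frac{s}{t}\right)\cos\!\left(s\sqrt{L}\right)ds,
\end{equation}
so that, at the level of kernels,
\begin{equation}
K^{F}_{t}(x,y)=\frac{1}{2\pi t}\int_{\mathbf{R}}\widehat{g}\!\left(\frac{s}{t}\right)K^{\cos}_{s}(x,y)\,ds,
\end{equation}
where $K^{\cos}_{s}$ denotes the Schwartz kernel of $\cos(s\sqrt{L})$.

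The second ingredient is finite propagation speed: since $L$ is a second order elliptic self-adjoint operator, the solution operator $\cos(s\sqrt{L})$ of the wave equation $\partial_{s}^{2}u+Lu=0$ propagates at finite speed, so that $K^{\cos}_{s}(x,y)=0$ whenever $d(x,y)>c\,|s|$, where $d$ is the Riemannian distance and the constant $c$ accounts for the comparison between $d$ and the distance defined by the principal symbol of $L$ (on a compact manifold the two are uniformly comparable). Consequently only values $|s|\gtrsim d(x,y)$ contribute to the integral above. To extract decay, note that on the support of $K^{\cos}_{s}$ one has $\left(d(x,y)/t\right)^{2k}\le (c\,s/t)^{2k}$, and that multiplying $\widehat{g}(s/t)$ by $(s/t)^{2k}$ produces, up to a constant, the Fourier transform of $g^{(2k)}$, which is again the transform of a compactly supported smooth function and hence decays faster than any power. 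Balancing $2k$ against $N$ and splitting the remaining factor to keep the $s$-integral convergent yields the decay factor $[1+d(x,y)/t]^{-N}$.

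The main obstacle is that $\cos(s\sqrt{L})$ is only a bounded operator on $L_{2}(\mathbf{M})$ and $K^{\cos}_{s}$ is a genuine distribution, so the heuristic manipulation above cannot be carried out pointwise as written; the passage from $L_{2}$-operator bounds to pointwise kernel bounds is where the real work lies. I would handle this by keeping everything at the operator level and invoking a Davies--Gaffney / Sikora type transference: the finite-speed property gives, for multipliers whose Fourier transform is supported in $[-r,r]$, the $L_{2}$ support localization of $g(t\sqrt{L})$ at scale $rt$, and this $L_{2}$ localization is upgraded to a pointwise bound using (i) the on-diagonal estimate $\sup_{x}|K^{F}_{t}(x,x)|\le C t^{-n}$ and (ii) the volume comparison $\mathrm{Vol}\,B(x,r)\sim r^{n}$ for $0<r\le 1$. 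The on-diagonal estimate itself follows from the local Weyl law, namely the uniform bound $\sum_{\lambda_{l}\le\omega}|u_{l}(x)|^{2}\le C\omega^{n/2}$, since $F$ has bounded support and therefore only the $\lesssim t^{-n}$ eigenvalues with $\lambda_{l}\lesssim t^{-2}$ enter the sum defining $K^{F}_{t}(x,x)$.

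Finally I would assemble the two regimes. For $d(x,y)\le t$ the on-diagonal bound together with the Cauchy--Schwarz inequality applied to the spectral expansion (\ref{expout}) already gives $|K^{F}_{t}(x,y)|\le C t^{-n}\le 2^{N}C t^{-n}[1+d(x,y)/t]^{-N}$. For $d(x,y)>t$ the finite-speed argument supplies the additional factor $[1+d(x,y)/t]^{-N}$, while the same on-diagonal and volume normalization produces the prefactor $t^{-n}$; tracking the number of derivatives of $g$ used shows that the resulting constant depends only on $N$ and on the $C^{N}$-norm of $F$, as claimed. The restriction $0<t\le 1$ is precisely what makes the volume comparison $\mathrm{Vol}\,B(x,r)\sim r^{n}$ available, since this is where the manifold looks Euclidean at the relevant scales.
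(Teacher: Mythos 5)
The paper itself contains no proof of Theorem \ref{nrdglc}: the estimate is quoted from \cite{gp}, where it is obtained (following Geller--Mayeli) by pseudodifferential-operator methods, i.e.\ by showing that $F(t^{2}L)$ is, uniformly in $0<t\le 1$, a pseudodifferential operator with appropriately scaled symbol estimates and reading off the kernel decay from the symbol calculus. Your route --- pass to the even multiplier $g(\xi)=F(\xi^{2})$, write $g(t\sqrt{L})=\frac{1}{2\pi t}\int\widehat{g}(s/t)\cos(s\sqrt{L})\,ds$, and play finite propagation speed of $\cos(s\sqrt{L})$ against the rapid decay of $\widehat{g}$ --- is the Cheeger--Gromov--Taylor/Sikora method, and it is a legitimate and genuinely different way to the same estimate. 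It asks less of $L$ (only a Davies--Gaffney type localization plus an on-diagonal bound), at the price of not directly yielding the derivative estimates on $K^{F}_{t}$ that \cite{gp} also needs elsewhere; the symbol-calculus route gives those for free. You correctly isolate the real difficulty, namely that $K^{\cos}_{s}$ is only a distribution, and you name the standard repair: decompose $g$ into pieces whose Fourier transforms sit in dyadic intervals, use finite speed to localize each piece's kernel in $d(x,y)$, and upgrade the resulting $L_{2}$ off-diagonal bounds to pointwise ones via $\sup_{x}K^{F}_{t}(x,x)\le Ct^{-n}$ and $|B(x,r)|\sim r^{n}$ for $r\le 1$. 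Two points to watch if you execute this: (i) the on-diagonal bound is obtained more cheaply from the heat kernel, $\sum_{\lambda_{l}\le t^{-2}}|u_{l}(x)|^{2}\le e\sum_{l}e^{-t^{2}\lambda_{l}}|u_{l}(x)|^{2}\le Ct^{-n}$, than from the full local Weyl law; and (ii) extracting the factor $(d(x,y)/t)^{N}$ while keeping the $s$-integral convergent costs roughly $N+2$ derivatives of $g$, so the constant you actually produce is controlled by $\|F\|_{C^{N+2}}$ rather than $\|F\|_{C^{N}}$ --- harmless for every use of the theorem in this paper, but not literally the dependence asserted in the statement.
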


This estimate has the following important implication.

\begin{corollary}
\label{Kerbound}
Consider  $1 \leq \alpha \leq \infty$, with conjugate index $\alpha'$.
In the situation of Theorem \ref{kersize}, there is a constant $C > 0$ such that 
\begin{equation}
\label{kint3a}
\left(\int |K^{F}_t(x,y)|^{\alpha}dy\right)^{1/\alpha} \leq Ct^{-n/\alpha'} \ \ \ \ \ \ \ \ \ \ \ \ \ \ \ \ \ \ \ \ \mbox{for all } x.
\end{equation}
\end{corollary}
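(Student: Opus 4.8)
The plan is to obtain the $L_{\alpha}$ estimate by inserting the pointwise bound (\ref{kersize}) of Theorem \ref{nrdglc} into the integral and reducing everything to a purely metric--measure estimate on $\mathbf{M}$. Since the exponent $N$ in (\ref{kersize}) may be taken arbitrarily large, I would first fix $N$ large enough that $N\alpha > n+1$ (which is compatible with the requirement $N>n$ of Theorem \ref{nrdglc} because $\alpha\geq 1$). The case $\alpha=\infty$ is immediate: (\ref{kersize}) together with $d(x,y)\ge 0$ gives $\sup_{y}|K^{F}_t(x,y)|\le C(F,N)\,t^{-n}=C(F,N)\,t^{-n/\alpha'}$, so I focus on $1\le\alpha<\infty$.

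For finite $\alpha$, raising (\ref{kersize}) to the power $\alpha$ and integrating in $y$ yields
\begin{equation}
\int_{\mathbf M}|K^{F}_t(x,y)|^{\alpha}\,dy \le \frac{C(F,N)^{\alpha}}{t^{n\alpha}}\int_{\mathbf M}\left[1+\frac{d(x,y)}{t}\right]^{-N\alpha}dy ,
\end{equation}
so the whole problem reduces to bounding $J(t,x):=\int_{\mathbf M}[1+d(x,y)/t]^{-N\alpha}\,dy$ by $C t^{n}$ uniformly in $x$ and in $0<t\le 1$. To do this I would decompose $\mathbf{M}$ into the geodesic annuli $A_{k}=\{\,y:\ kt\le d(x,y)<(k+1)t\,\}$, $k=0,1,2,\dots$; since $\mathbf{M}$ is compact only finitely many are nonempty, but the bound will be uniform in their number. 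On $A_{k}$ the integrand is at most $(1+k)^{-N\alpha}$, and the key geometric input is a uniform volume estimate $\mathrm{vol}\,B(x,\rho)\le C\rho^{n}$ valid for all $x\in\mathbf M$ and all $0<\rho\le\mathrm{diam}\,\mathbf M$. Such a bound holds on any compact manifold: for $\rho$ below the injectivity radius it follows from comparison in normal coordinates, while for larger $\rho$ the ball volume is at most $\mathrm{vol}\,\mathbf M$ and $\rho^{n}$ is bounded below by a fixed positive constant, so the inequality persists after enlarging $C$. Consequently $\mathrm{vol}\,A_{k}\le \mathrm{vol}\,B(x,(k+1)t)\le C(k+1)^{n}t^{n}$.

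Summing over the annuli then gives
\begin{equation}
J(t,x)\le C\,t^{n}\sum_{k=0}^{\infty}(k+1)^{\,n-N\alpha},
\end{equation}
and the series converges precisely because I chose $N\alpha>n+1$; crucially its value is independent of $t$ and $x$. Feeding $J(t,x)\le Ct^{n}$ back into the displayed integral produces $\int_{\mathbf M}|K^{F}_t(x,y)|^{\alpha}\,dy\le C\,t^{\,n-n\alpha}$, and taking the $1/\alpha$ power gives $C\,t^{-(n\alpha-n)/\alpha}$; since $(n\alpha-n)/\alpha=n(1-1/\alpha)=n/\alpha'$, this is exactly the claimed bound $C\,t^{-n/\alpha'}$.

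I expect the only genuine obstacle to be the uniform volume growth estimate $\mathrm{vol}\,B(x,\rho)\le C\rho^{n}$: one must verify that a single constant works simultaneously for all centers $x$ and all radii up to the diameter, rather than just for small radii. Everything else---the substitution of (\ref{kersize}), the annular splitting, and the summation---is routine once $N$ is fixed large enough to guarantee convergence of the series, which is the feature of Theorem \ref{nrdglc} that makes the argument $t$-uniform.
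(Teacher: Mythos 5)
Your proof is correct and follows essentially the same route as the paper: insert the pointwise kernel bound from Theorem \ref{nrdglc}, decompose $\mathbf{M}$ into annuli around $x$, and use the uniform volume growth $|B(x,\rho)|\le C\rho^{n}$ (valid on a compact manifold for all radii up to the diameter) to sum a convergent series and obtain $\int[1+d(x,y)/t]^{-M}dy\le Ct^{n}$. If anything you are more explicit than the paper, which uses dyadic rather than arithmetic annuli and leaves the $\alpha=\infty$ case and the step from the integral estimate with exponent $N$ to the one with exponent $N\alpha$ implicit.
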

\begin{proof}  First, we note 
that if $N > n$, $x \in {\bf M}$ and $t > 0$, then
\begin{equation}
\label{intest}
\int_{\bf M} \frac{1}{\left[1 + (d(x,y)/t)\right]^N} dy \leq Ct^{n},\>\>\>n= \dim {\bf M},
\end{equation}
with $C$ independent of $x$ or $t$. 
Indeed, there exist $c_{1}, c_{2}>0$ such that for all $x\in M$ and all sufficiently small $r\leq \delta$ one has
 $$
 c_{1}r^{n}\leq |B(x,r)|\leq c_{2}r^{n},
 $$
 and if $r>\delta$
 $$
 c_{3}\delta^{n}\leq |B(x,r)|\leq |\mathbf{M}|\leq c_{4}r^{n}.
 $$

 For fixed $x,t$  let $A_{j}=B(x, 2^{j}t)\setminus B(x, 2^{j-1}t)$. Then $|A_{j}|\leq c_{4}2^{nj}t^{n}$ and for every $A_{j}$ one has
 $$
\int_{A_{j}} \frac{1}{\left[1 + (d(x,y)/t)\right]^N} dy\leq c_{4}2^{(n-N)j}t^{n}.
 $$
 In other words,
 $$
 \int_{\bf M} \frac{1}{\left[1 + (d(x,y)/t)\right]^N} dy=\sum_{j}\int_{A_{j}} \frac{1}{\left[1 + (d(x,y)/t)\right]^N} dy\leq c_{4}\sum_{j}2^{(n-N)j}t^{n}.
 $$
 Using this estimate and (\ref{kersize}) one obtains (\ref{intest}).

This completes the proof.

\end{proof}

  \begin{theorem} \label{boundness}
 If $F\in C_{0}^{\infty}(\mathbf{R})$ and  $(1/q)+1 = (1/p)+(1/\alpha)$ then for the same constant $C$ as in (\ref{kint3a}) one has 
 $$
 \|F(t^{2}L)\|_{L_{p}(\mathbf{M})\rightarrow L_{q}(\mathbf{M})}\leq Ct^{-n/\alpha'},\>\>\>n=dim\>\mathbf{M},
 $$
  for all $0<t\leq 1$. In particular, 
 $$
 \|F(t^{2}L)\|_{L_{p}(\mathbf{M})\rightarrow L_{p}(\mathbf{M})}\leq C,\>\>\>n=dim\>\mathbf{M},
 $$
  for all $0<t\leq 1$.

 \end{theorem}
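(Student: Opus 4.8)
The plan is to deduce Theorem \ref{boundness} from the kernel size estimate via a standard application of Young-type (Schur-type) inequalities for integral operators, using Corollary \ref{Kerbound} to control the relevant $L_\alpha$-norms of the kernel. The operator $F(t^2L)$ acts by integration against $K^F_t(x,y)$, so the whole statement is really a mapping property of an integral operator whose kernel we understand well, and the exponent relation $(1/q)+1=(1/p)+(1/\alpha)$ is precisely the one appearing in Young's convolution inequality. My strategy is therefore to treat $K^F_t$ as a generalized convolution kernel on $\mathbf{M}$ and invoke the Young/Schur machinery, with the decay estimate doing all the quantitative work.

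First I would record the generalized Young inequality for integral operators: if an operator $Tf(x)=\int \mathcal K(x,y)f(y)\,dy$ has a kernel satisfying the uniform bounds $\sup_x\left(\int|\mathcal K(x,y)|^\alpha\,dy\right)^{1/\alpha}\le A$ and the symmetric bound $\sup_y\left(\int|\mathcal K(x,y)|^\alpha\,dx\right)^{1/\alpha}\le A$, and if $(1/q)+1=(1/p)+(1/\alpha)$, then $\|T\|_{L_p\to L_q}\le A$. The proof of this is the usual interpolation/H\"older argument (split the kernel as a product of three factors with exponents chosen so that each lands in the right space, then apply Minkowski's integral inequality), and I would either cite it as the classical kernel version of Young's inequality or sketch it in one line. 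For our operator, Corollary \ref{Kerbound} gives exactly $A=Ct^{-n/\alpha'}$ for the $y$-integral; because $K^F_t(x,y)=K^F_t(y,x)$ is symmetric in $x$ and $y$ (it is built from the symmetric spectral expansion \eqref{expout}), the same bound holds for the $x$-integral, so both hypotheses hold with the same constant.

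Thus I would set $\mathcal K=K^F_t$, apply the generalized Young inequality with the stated exponent relation, and read off
$$
\|F(t^2L)\|_{L_p(\mathbf{M})\to L_q(\mathbf{M})}\le Ct^{-n/\alpha'}.
$$
For the second, ``in particular'' assertion, I would simply specialize: taking $\alpha=1$ (so $\alpha'=\infty$ and $t^{-n/\alpha'}=1$) forces $1/q=1/p$ by the exponent relation, giving the uniform bound $\|F(t^2L)\|_{L_p\to L_p}\le C$ for all $0<t\le 1$.

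The main obstacle is not really analytical but bookkeeping: I must make sure the symmetry of the kernel is invoked correctly so that \emph{both} Schur conditions hold with the constant from Corollary \ref{Kerbound}, and I must check that the exponent relation $(1/q)+1=(1/p)+(1/\alpha)$ together with $1\le p,q\le\infty$ keeps $\alpha$ in the admissible range $[1,\infty]$ so that Corollary \ref{Kerbound} actually applies. The only genuine subtlety is the boundary/endpoint behavior (e.g. $p$ or $q$ equal to $1$ or $\infty$), where the H\"older exponents degenerate; there I would either argue directly or note that the kernel estimate \eqref{kint3a} still furnishes the needed $L_\alpha$ control, so the conclusion extends by the same computation.
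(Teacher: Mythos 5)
Your proposal is correct and follows essentially the same route as the paper: the paper's proof of Theorem \ref{boundness} consists precisely of combining Corollary \ref{Kerbound} with the generalized Young inequality stated as Lemma \ref{younggen}, the symmetry $K^F_t(x,y)=K^F_t(y,x)$ from the spectral expansion (\ref{expout}) supplying the second Schur condition. Your explicit remarks on the symmetry, the admissible range of $\alpha$, and the specialization $\alpha=1$ for the $L_p\to L_p$ bound only make explicit what the paper leaves implicit.
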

 
\begin{proof} The proof follows from Corollary \ref{Kerbound} and the following Young inequalities.
\end{proof}
 \begin{lemma}
\label{younggen}
Let $\mathcal{K}(x,y)$ be a measurable function on $\mathbf{M}\times\mathbf{M}$. Suppose that $1 \leq p, \alpha \leq \infty$, and that $(1/q)+1 = (1/p)+(1/\alpha)$. 
If there exists a  $C > 0$ such  that
\begin{equation}
\label{kint1}
\left(\int_{\mathbf{M}} |\mathcal{K}(x,y)|^{\alpha}dy\right)^{1/\alpha} \leq C \ \ \ \ \ \ \ \ \ \mbox{for all } x\in \mathbf{M}, 
\end{equation}
and
\begin{equation}
\label{kint2}
\left(\int_{\mathbf{M}} |\mathcal{K}( x,y)|^{\alpha}dx\right)^{1/\alpha} \leq C \ \ \ \ \ \ \ \ \ \ \mbox{for all } y\in \mathbf{M},
\end{equation}
then for the same constant  $C$ for all $f \in L_p(\mathbf{M})$  one has the inequality
$$
\left\|\int_{\mathbf{M}}\mathcal{K}( x,y)f(y)dy\right\|_{L_{q}({\bf M})} \leq C\|f\|_{L_{p}({\bf M})}.
$$
\end{lemma}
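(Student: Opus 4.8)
The plan is to prove the stated inequality for the integral operator $Tf(x) = \int_{\mathbf{M}} \mathcal{K}(x,y) f(y)\,dy$ directly, by a single three-factor application of Hölder's inequality followed by Fubini's theorem, reserving the degenerate exponent cases for separate elementary treatment.

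First I would record that the hypothesis $1 + 1/q = 1/p + 1/\alpha$ forces both $q \geq p$ and $q \geq \alpha$, so all the powers appearing below are nonnegative. Assuming for the moment that $p,\alpha,q$ are all finite, I would write $g(x,y) = |\mathcal{K}(x,y)|$ and factor the integrand as
$$g(x,y)|f(y)| = g^{1-\alpha/q}\cdot \left(g^{\alpha}|f|^{p}\right)^{1/q}\cdot |f|^{1-p/q},$$
and apply Hölder's inequality in $y$ with three exponents $s_1, s_2=q, s_3$ determined by $1/s_1 = 1/\alpha - 1/q$ and $1/s_3 = 1/p - 1/q$. The key bookkeeping check is that $1/s_1 + 1/s_2 + 1/s_3 = 1/\alpha + 1/p - 1/q$, which equals $1$ precisely by the hypothesis relating the exponents; this is what makes the factorization admissible. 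Using the first kernel bound (\ref{kint1}) on the $g^{1-\alpha/q}$ factor and recognizing $\|f\|_{L_p}^{1-p/q}$ in the third, this yields the pointwise estimate
$$|Tf(x)| \leq C^{1-\alpha/q}\,\|f\|_{L_p({\bf M})}^{1-p/q}\left(\int_{\mathbf{M}} g(x,y)^{\alpha}|f(y)|^{p}\,dy\right)^{1/q}.$$

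Next I would raise this to the $q$-th power, integrate in $x$, and invoke Fubini's theorem to interchange the order of integration; the second kernel bound (\ref{kint2}) then controls the inner $x$-integral of $g^{\alpha}$ by $C^{\alpha}$, leaving exactly $C^{\alpha}\|f\|_{L_p}^{p}$. Collecting the powers of $C$ and of $\|f\|_{L_p}$ gives $\|Tf\|_{L_q}^{q} \leq C^{q}\|f\|_{L_p}^{q}$, and taking $q$-th roots produces the claimed inequality with the same constant $C$. I expect the main obstacle to be purely organizational: choosing the three powers so that the exponent identity collapses to the hypothesis, and confirming that the constant is genuinely unchanged rather than merely bounded by a multiple of $C$.

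Finally I would dispose of the boundary cases where an exponent is $1$ or $\infty$ by a single, simpler step: when $q = \infty$ the relation forces $\alpha = p'$ and the bound is immediate from (\ref{kint1}) and Hölder in $y$; when $p = 1$ (so $q = \alpha$) it follows from Minkowski's integral inequality together with (\ref{kint2}); the case $\alpha = 1$ (so $q = p$) is the classical Schur test and is the $s_1,s_3$ degeneration of the computation above. As an alternative that matches the interpolation philosophy of the paper, one may instead establish the two endpoint estimates $T\colon L_1(\mathbf{M}) \to L_{\alpha}(\mathbf{M})$ and $T\colon L_{\alpha'}(\mathbf{M}) \to L_{\infty}(\mathbf{M})$, each with norm at most $C$ and each following from a single application of Minkowski's (resp. Hölder's) inequality and one of the two hypotheses, and then obtain the general $(p,q)$ estimate by Riesz--Thorin interpolation, whose multiplicative norm bound $C^{1-\theta}C^{\theta}=C$ again preserves the constant.
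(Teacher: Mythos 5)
Your argument is correct, and it reaches the same conclusion by a recognizably different (though equally standard) route. The paper splits the integrand as $|\mathcal{K}|^{1/\beta'}\cdot\bigl(|\mathcal{K}|^{1/\beta}|f|\bigr)$ with $\beta=q/\alpha$, applies a \emph{two}-exponent H\"older inequality in $y$ with exponents $p',p$ (so that $|f|$ sits entirely inside the second factor), and then handles the $x$-integration of $\bigl(\int|\mathcal{K}|^{p/\beta}|f|^p\,dy\bigr)^{q/p}$ by Minkowski's inequality for integrals, using hypothesis (\ref{kint2}) at that stage. You instead perform the \emph{three}-factor split $g|f|=g^{1-\alpha/q}\cdot(g^{\alpha}|f|^{p})^{1/q}\cdot|f|^{1-p/q}$ so that after raising to the $q$-th power only a plain double integral remains, and Tonelli--Fubini replaces Minkowski's integral inequality; your exponent bookkeeping ($1/s_1+1/q+1/s_3=1/\alpha+1/p-1/q=1$, with $q\geq p$ and $q\geq\alpha$ guaranteeing nonnegative powers) is right, and the constant indeed comes out as exactly $C^{(q-\alpha)+\alpha}=C^{q}$ before the $q$-th root. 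What each buys: your version avoids the generalized Minkowski inequality entirely and makes the degenerate exponents visible as explicit limiting cases (which the paper's proof silently assumes away, since $\beta'=p'/\alpha$ is ill-defined when $p=1$ or $\alpha=\infty$), at the cost of a three-way H\"older application; the paper's version is marginally shorter in the generic case. Your Riesz--Thorin alternative via the endpoints $L_1\to L_{\alpha}$ and $L_{\alpha'}\to L_{\infty}$ is also valid and preserves the constant, and it is genuinely different in spirit, though it is heavier machinery than the lemma requires.
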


\begin{proof} Let $\beta = q/\alpha \geq 1$,
so that $\beta' = p'/\alpha$.  For any $x$, we have
\begin{eqnarray*}
|({\mathcal K}f)(x)| & \leq & \int |{\mathcal K}(x,y)|^{1/\beta'}|{\mathcal K}(x,y)|^{1/\beta}f(y)|dy  \\
& \leq & \left(\int |{\mathcal K}(x,y)|^{p'/\beta'}dy\right)^{1/p'} \left(\int |{\mathcal K}(x,y)|^{p/\beta}|f(y)|^p dy\right)^{1/p}\\
& \leq & c^{1/\beta'}\left(\int |{\mathcal K}(x,y)|^{p/\beta}|f(y)|^p dy\right)^{1/p}
\end{eqnarray*}
since $p'/\beta' = \alpha$, $\alpha/p' = 1/\beta'$.  Thus
\begin{eqnarray*}
\|{\mathcal K}f\|^p_q & \leq & c^{p/\beta'}\left(\int \left(\int |{\mathcal K}(x,y)|^{p/\beta}|f(y)|^p dy \right)^{q/p}dx \right)^{p/q} \\
& \leq  & c^{p/\beta'}\int \left(\int |{\mathcal K}(x,y)|^{pq/\beta p}|f(y)|^{pq/p} dx \right)^{p/q}dy \\
& = & c^{p/\beta'}\int \left(\int |{\mathcal K}(x,y)|^{\alpha}dx \right)^{p/q}|f(y)|^p dy \\
& \leq & c^{p/\beta'}c^{p/\beta}\|f\|^p_p
\end{eqnarray*}
as desired.  (In the second line, we have used Minkowski's inequality for integrals.)\\

\end{proof}

\section{Interpolation and  Approximation spaces. }\label{interp}

The goal of this section is to remind  the reader of certain  connections
between interpolation spaces and approximation spaces which will
be used later. The general theory of interpolation spaces can be found
in \cite{BL}, \cite{BB},  \cite{KPS}. The notion of
Approximation spaces and their relations to Interpolations spaces
can be found in \cite{BS}, \cite{PS}, and  in \cite{BL}, Ch. 3 and 7. 

It is important to realize that relations between Interpolation and Approximation spaces cannot  be described in  the language of normed spaces. One has to use the language of
quasi-normed linear spaces   to treat
 interpolation and approximation spaces simultaneously.

Let $E$ be a linear space. A quasi-norm $\|\cdot\|_{E}$ on $E$ is
a real-valued function on $E$ such that for any $f,f_{1}, f_{2}\in
E$ the following holds true

\begin{enumerate}
\item $\|f\|_{E}\geq 0;$

\item $\|f\|_{E}=0  \Longleftrightarrow   f=0;$

\item $\|-f\|_{E}=\|f\|_{E};$

\item $\|f_{1}+f_{2}\|_{E}\leq C_{E}(\|f_{1}\|_{E}+\|f_{2}\|_{E}),
C_{E}>1.$

\end{enumerate}

We say that two quasi-normed linear spaces $E$ and $F$ form a
pair, if they are linear subspaces of a linear space $\mathcal{A}$
and the conditions 
$$
\|f_{k}-g\|_{E}\rightarrow 0,\>\>\>\>\|f_{k}-h\|_{F}\rightarrow 0, \>\>k\rightarrow \infty,\>\>f_{k},\> g, \>h \in \mathcal{A}, 
$$
imply equality $g=h$. For a such pair $E,F$ one can construct a
new quasi-normed linear space $E\bigcap F$ with quasi-norm
$$
\|f\|_{E\bigcap F}=\max\left(\|f\|_{E},\|f\|_{F}\right)
$$
and another one $E+F$ with the quasi-norm
$$
\|f\|_{E+ F}=\inf_{f=f_{0}+f_{1},f_{0}\in E, f_{1}\in
F}\left(\|f_{0}\|_{E}+\|f_{1}\|_{F}\right).
$$

All quasi-normed spaces $H$ for which $E\bigcap F\subset H \subset
E+F$ are called intermediate between $E$ and $F$. A vector space 
homomorphism $T: E\rightarrow F$ is called bounded if
$$
\|T\|=\sup_{f\in E,f\neq 0}\|Tf\|_{F}/\|f\|_{E}<\infty.
$$
One says that an intermediate quasi-normed linear space $H$
interpolates between $E$ and $F$ if every bounded homomorphism $T:
E+F\rightarrow E+F$ which is also bounded when restricted to $E$ and $F$ is  bounded
homomorphism of $H$ into $H$.

On $E+F$ one considers the so-called Peetre's $K$-functional
\begin{equation}
K(f, t)=K(f, t,E, F)=\inf_{f=f_{0}+f_{1},f_{0}\in E, f_{1}\in
F}\left(\|f_{0}\|_{E}+t\|f_{1}\|_{F}\right).\label{K}
\end{equation}
The quasi-normed linear space $(E,F)^{K}_{\theta,q}, 0<\theta<1,
0<q\leq \infty,$ or $0\leq\theta\leq 1,  q= \infty,$ is introduced
as a set of elements $f$ in $E+F$ for which
\begin{equation}
\|f\|_{\theta,q}=\left(\int_{0}^{\infty}
\left(t^{-\theta}K(f,t)\right)^{q}\frac{dt}{t}\right)^{1/q}.\label{Knorm}
\end{equation}

It turns out that $(E,F)^{K}_{\theta,q}, 0<\theta<1, 0\leq q\leq
\infty,$ or $0\leq\theta\leq 1,  q= \infty,$ with the quasi-norm
(\ref{Knorm})  interpolates between $E$ and $F$. The following
Reiteration Theorem is one of the main results of the theory (see
\cite{BL}, \cite{BB}, \cite{KPS}, \cite{PS}).
\begin{theorem}\label{Int}
Suppose that $E_{0}, E_{1}$ are complete intermediate quasi-normed
linear spaces for the pair $E,F$.  If $E_{i}\in
\mathcal{K}(\theta_{i}, E, F)$, which means
$$
K(f,t,E,F)\leq Ct^{\theta_{i}}\|f\|_{E_{i}}, i=0,1,
$$
where $ 0\leq \theta_{i}\leq 1, \theta_{0}\neq\theta_{1},$ then
$$
(E_{0},E_{1})^{K}_{\eta,q}\subset (E,F)^{K}_{\theta,q},
$$
where $0<q<\infty, 0<\eta<1,
\theta=(1-\eta)\theta_{0}+\eta\theta_{1}$.

 If for the same pair
$E,F$ and the same $E_{0}, E_{1}$  one has $E_{i}\in
\mathcal{J}(\theta_{i}, E, F)$, i. e.
$$
\|f\|_{E_{i}}\leq C\|f\|_{E}^{1-\theta_{i}}\|f\|_{F}^{\theta_{i}},
i=0,1,
$$
where $ 0\leq \theta_{i}\leq 1, \theta_{0}\neq\theta_{1},$ then
$$
(E,F)^{K}_{\theta,q},\subset (E_{0},E_{1})^{K}_{\eta,q},
$$
where $0<q<\infty, 0<\eta<1,
\theta=(1-\eta)\theta_{0}+\eta\theta_{1}$.

\end{theorem}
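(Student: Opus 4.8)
The plan is to prove the two inclusions separately by comparing the $K$-functionals of the pairs $(E,F)$ and $(E_0,E_1)$ evaluated at matched scales; each inclusion then reduces to a change of variables (in the first half) or a discrete Hardy-type estimate (in the second half) inside the integral defining the norm (\ref{Knorm}). Throughout I may assume, after relabeling, that $\theta_0<\theta_1$, and I will use freely that the norm (\ref{Knorm}) is equivalent to the discrete expression $\left(\sum_{k\in\mathbb{Z}}(2^{-k\theta}K(f,2^k))^q\right)^{1/q}$, which follows from the monotonicity of $t\mapsto K(f,t)$ and of $t\mapsto K(f,t)/t$.

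For the first inclusion, under the $\mathcal{K}$-hypothesis, I would fix $f$ and, for each $s>0$, choose a near-optimal decomposition $f=g_0+g_1$ with $g_0\in E_0$, $g_1\in E_1$ and $\|g_0\|_{E_0}+s\|g_1\|_{E_1}\leq 2K(f,s,E_0,E_1)$. Subadditivity of the $K$-functional together with the two hypotheses $K(g_i,t,E,F)\leq Ct^{\theta_i}\|g_i\|_{E_i}$ then yields
$$K(f,t,E,F)\leq C\left(t^{\theta_0}\|g_0\|_{E_0}+t^{\theta_1}\|g_1\|_{E_1}\right).$$
Choosing $s=t^{\theta_1-\theta_0}$ makes the right-hand side a multiple of $t^{\theta_0}\left(\|g_0\|_{E_0}+s\|g_1\|_{E_1}\right)$, so taking the infimum over decompositions gives
$$K(f,t,E,F)\leq 2Ct^{\theta_0}K(f,t^{\theta_1-\theta_0},E_0,E_1).$$
Since $\theta-\theta_0=\eta(\theta_1-\theta_0)$, the substitution $s=t^{\theta_1-\theta_0}$ in (\ref{Knorm}), under which $t^{\theta_0-\theta}=s^{-\eta}$ and $dt/t=(\theta_1-\theta_0)^{-1}ds/s$, converts $\|f\|_{\theta,q}$ into a fixed multiple of the $(E_0,E_1)^K_{\eta,q}$-norm, which is exactly the asserted inclusion.

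For the second inclusion, under the $\mathcal{J}$-hypothesis, I would first recast the hypothesis through the $J$-functional $J(t,f)=\max(\|f\|_E,t\|f\|_F)$: from $\|f\|_E\leq J(t,f)$ and $\|f\|_F\leq t^{-1}J(t,f)$ one obtains $\|f\|_{E_i}\leq Ct^{-\theta_i}J(t,f)$ for all $t>0$. Then I would invoke the fundamental lemma of interpolation to represent any $f$ in the closure of $E\cap F$ as $f=\sum_{k\in\mathbb{Z}}u_k$ with $u_k\in E\cap F$ and $J(2^k,u_k)\leq\gamma K(2^k,f)$ for a universal $\gamma$. Splitting the series at an index $m$, setting $h_0=\sum_{k\leq m}u_k$, $h_1=\sum_{k>m}u_k$, and using $\|u_k\|_{E_i}\leq C2^{-k\theta_i}J(2^k,u_k)\leq C\gamma 2^{-k\theta_i}K(2^k,f)$, I would bound $K(f,2^{m(\theta_1-\theta_0)},E_0,E_1)\leq\|h_0\|_{E_0}+2^{m(\theta_1-\theta_0)}\|h_1\|_{E_1}$ by two dyadic sums of the numbers $2^{-k\theta_i}K(2^k,f)$. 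A discrete Hardy inequality would then control the resulting $\ell^q$-norm by the discrete $(E,F)^K_{\theta,q}$-norm, finishing the inclusion.

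The main obstacle, and the reason the argument must be phrased with care, is that $E,F,E_0,E_1$ carry only quasi-norms, so the triangle inequality holds merely up to the constants $C_E,C_F,\dots$, and these constants threaten to blow up across the infinite sums in the second half. I would resolve this with the Aoki--Rolewicz theorem: each quasi-norm is equivalent to a genuine $\rho$-norm for some $0<\rho\leq 1$, and passing to the $\rho$-subadditive norms renders the series estimates and the discrete Hardy inequality legitimate with constants independent of the number of terms. The second delicate point is the fundamental lemma itself in the quasi-normed setting; this is precisely where the assumed completeness of $E_0,E_1$ is used, to guarantee convergence in $E+F$ of the decompositions produced.
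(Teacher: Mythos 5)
The paper does not actually prove Theorem \ref{Int}: it is quoted as a known result of interpolation theory with a pointer to the references \cite{BL}, \cite{BB}, \cite{KPS}, \cite{PS}, so there is no in-paper argument to compare yours against. What you have written is, in outline, the standard proof of the reiteration theorem as found in those references (e.g.\ Bergh--L\"ofstr\"om, Ch.~3): the $\mathcal{K}$-half by inserting a near-optimal decomposition for $K(f,s,E_0,E_1)$ into the subadditivity of $K(\cdot,t,E,F)$ and matching scales via $s=t^{\theta_1-\theta_0}$, and the $\mathcal{J}$-half by converting the hypothesis into $\|u\|_{E_i}\leq Ct^{-\theta_i}J(t,u)$, invoking the fundamental lemma to write $f=\sum_k u_k$ with $J(2^k,u_k)\leq\gamma K(2^k,f)$, splitting the series at a scale-matched index, and closing with a discrete Hardy inequality. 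The computation in the first half is correct (note only that once you have fixed the near-optimal $g_0,g_1$ for the given $s$ you simply use $\|g_0\|_{E_0}+s\|g_1\|_{E_1}\leq 2K(f,s,E_0,E_1)$ rather than ``taking the infimum again,'' and that the constant also absorbs the quasi-triangle constant of $E+F$), and the two genuinely delicate points in the second half --- that quasi-norms must be replaced by equivalent $\rho$-subadditive norms via Aoki--Rolewicz before summing infinitely many terms, and that completeness of $E_0,E_1$ is what makes the two halves of the series converge in those spaces --- are exactly the right ones to flag, and you flag both. Since the theorem restricts to $0<q<\infty$ and $0<\eta<1$, the hypothesis of the fundamental lemma ($\min(1,1/t)K(t,f)\to 0$ at $0$ and $\infty$) is satisfied for $f\in(E,F)^K_{\theta,q}$, so that step is legitimate. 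I see no gap; this is a sound reconstruction of the cited proof rather than a new route.
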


It is important to note that in all cases  considered
in the present article the space $F$ will be continuously embedded
as a subspace into $E$. In this case (\ref{K}) can be introduced
by the formula
$$
K(f, t)=\inf_{f_{1}\in
F}\left(\|f-f_{1}\|_{E}+t\|f_{1}\|_{F}\right),
$$
which implies the inequality
\begin{equation}
K(f, t)\leq \|f\|_{E}.
\end{equation}
This inequality can be used to show  that the norm (\ref{Knorm})
is equivalent to the norm
\begin{equation}
\|f\|_{\theta,q}=\|f\|_{E}+\left(\int_{0}^{\varepsilon}
\left(t^{-\theta}K(f, t)\right)^{q}\frac{dt}{t}\right)^{1/q},
\varepsilon>0,
\end{equation}
for any positive $\varepsilon$.

Let us introduce another functional on $E+F$, where $E$ and $F$
form a pair of quasi-normed linear spaces
$$
\mathcal{E}(f, t)=\mathcal{E}(f, t,E, F)=\inf_{g\in F,
\|g\|_{F}\leq t}\|f-g\|_{E}.
$$

\begin{definition}The approximation space $\mathcal{E}_{\alpha,q}(E, F),
0<\alpha<\infty, 0<q\leq \infty $ is a quasi-normed linear spaces
of all $f\in E+F$ with the following quasi-norm
\begin{equation}
\left(\int_{0}^{\infty}\left(t^{\alpha}\mathcal{E}(f,
t)\right)^{q}\frac{dt}{t}\right)^{1/q}.
\end{equation}
\end{definition}

For a general quasi-normed linear spaces  $E$ the notation
$(E)^{\rho}$ is used for a quasi-normed linear spaces whose
quasi-norm is $\|\cdot\|^{\rho}$.

The following Theorem describes relations between interpolation
and approximation spaces (see  \cite{BL}, Ch. 7).
\begin{theorem}
If $\theta=1/(\alpha+1)$ and $r=\theta q, $ then
$$
(\mathcal{E}_{\alpha, r}(E, F))^{\theta}=(E,F)^{K}_{\theta,q}.
$$
\end{theorem}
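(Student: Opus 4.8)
The plan is to reduce everything to the elementary duality between the approximation functional $\mathcal{E}(f,t)=\mathcal{E}(f,t,E,F)$ and Peetre's $K$-functional $K(f,t)=K(f,t,E,F)$. The first and central step is to establish the identity
\[
K(f,s)=\inf_{u>0}\bigl(\mathcal{E}(f,u)+su\bigr),\qquad s>0 .
\]
The bound ``$\leq$'' is obtained by choosing, for each $u$ and each $\varepsilon>0$, an element $g\in F$ with $\|g\|_F\leq u$ and $\|f-g\|_E\leq\mathcal{E}(f,u)+\varepsilon$, and testing the definition of $K$ with this $g$; the bound ``$\geq$'' follows by grouping the competitors $g$ in the definition of $K$ according to $\sigma=\|g\|_F$ and using $\|f-g\|_E\geq\mathcal{E}(f,\sigma)$. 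Since $F$ is continuously embedded in $E$ (the running assumption of this section), both functionals are finite and $\mathcal{E}(f,\cdot)$ is non-increasing, which is what makes the infimum above effectively computable.

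From this identity I would extract the two working estimates that drive the two inclusions. On the one hand $K(f,s)\leq\mathcal{E}(f,u)+su$ for every $u>0$; on the other hand, choosing $u=u(s)$ near-optimal in the infimum and using monotonicity, one gets a reverse control of the form $\mathcal{E}(f,u(s))\lesssim K(f,s)$ together with $s\,u(s)\lesssim K(f,s)$. The balancing index is the one where $\mathcal{E}(f,u)\sim s u$. Passing to dyadic scales $s=2^{k}$, $u=2^{j}$ turns the defining integrals $\int_{0}^{\infty}\bigl(s^{-\theta}K(f,s)\bigr)^{q}\,\frac{ds}{s}$ and $\int_{0}^{\infty}\bigl(t^{\alpha}\mathcal{E}(f,t)\bigr)^{r}\,\frac{dt}{t}$ into weighted $\ell^{q}$- and $\ell^{r}$-norms of the single monotone sequence $\{\mathcal{E}(f,2^{j})\}_{j}$. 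The precise relations $\theta=1/(\alpha+1)$ and $r=\theta q$, together with the outer exponent $\theta$ appearing in $(\mathcal{E}_{\alpha,r}(E,F))^{\theta}$, are exactly the bookkeeping needed to match the homogeneity of the two integrands once the balancing substitution $\mathcal{E}(f,u)\sim su$ is made.

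The two inclusions then reduce to the two classical discrete Hardy inequalities applied to the monotone sequence $\{\mathcal{E}(f,2^{j})\}$: summation from the ``smooth'' end bounds the interpolation quasi-norm by the approximation quasi-norm, and the dual Hardy inequality gives the reverse estimate. Throughout one works with quasi-norms rather than norms, so the triangle inequality is only available with the constant $C_{E}$; this affects constants but not the argument, and it is precisely why one compares $(\mathcal{E}_{\alpha,r}(E,F))^{\theta}$, carrying the $\theta$-power, with $(E,F)^{K}_{\theta,q}$.

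I expect the reverse inclusion --- estimating the approximation quasi-norm by the interpolation quasi-norm --- to be the main obstacle, since it requires ``inverting'' the infimum that defines $K$: one must recover a pointwise lower bound for $\mathcal{E}(f,u)$ from an upper bound on $K$, which is only possible because $\mathcal{E}(f,\cdot)$ is monotone and the balancing index $u(s)$ can be chosen consistently across dyadic scales. As a cross-check, the same equivalence can be recovered from the Reiteration Theorem \ref{Int} by verifying that the relevant spaces lie in the appropriate classes $\mathcal{K}(\theta_{i},E,F)$ and $\mathcal{J}(\theta_{i},E,F)$, which gives an independent confirmation of the exponent relations $\theta=1/(\alpha+1)$, $r=\theta q$.
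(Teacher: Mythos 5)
You should know at the outset that the paper does not prove this statement at all: it is quoted from \cite{BL}, Ch.~7, so there is no internal proof to compare against. Your strategy --- the exact identity $K(f,s)=\inf_{u>0}\bigl(\mathcal{E}(f,u)+su\bigr)$ (correct, since every $g\in F$ yields the competitor $f=(f-g)+g$ and conversely every decomposition is graded by $\sigma=\|f_1\|_F$), followed by dyadic discretization and the two Hardy inequalities for the monotone sequence $\{\mathcal{E}(f,2^j)\}$ --- is the standard and essentially correct route; it is the same $K$-versus-$\mathcal{E}$ machinery that the paper itself imports as (\ref{t})--(\ref{equiv}) inside the proof of Theorem \ref{A-I-equivalence}.

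The genuine gap is that the one step you declare to be ``exactly the bookkeeping needed'' is never performed, and when one performs it against the paper's definition (\ref{K}) of $K(f,t,E,F)=\inf(\|f_0\|_E+t\|f_1\|_F)$ it does not close. At the balance point $\mathcal{E}(f,u)\sim su$ one has $K(f,s)\sim su\sim\mathcal{E}(f,u)$ and $s\sim\mathcal{E}(f,u)/u$, hence $s^{-\theta}K(f,s)\sim u^{\theta}\mathcal{E}(f,u)^{1-\theta}$; on the other side the quasi-norm of $(\mathcal{E}_{\alpha,r}(E,F))^{\theta}$ with $r=\theta q$ and $\theta=1/(\alpha+1)$ has integrand $\bigl(u^{\alpha\theta}\mathcal{E}(f,u)^{\theta}\bigr)^{q}=\bigl(u^{1-\theta}\mathcal{E}(f,u)^{\theta}\bigr)^{q}$. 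The quantities $u^{\theta}\mathcal{E}^{1-\theta}$ and $u^{1-\theta}\mathcal{E}^{\theta}$ agree only when $\theta=1/2$. A test profile $\mathcal{E}(f,u)=\min(1,u^{-\gamma})$ makes this concrete: then $K(f,s)\sim\min(1,s^{\gamma/(\gamma+1)})$, so $f\in(E,F)^{K}_{\theta,q}$ iff $\theta<\gamma/(\gamma+1)$, while $f\in\mathcal{E}_{\alpha,r}(E,F)$ iff $\alpha<\gamma$; these coincide for $\theta=\alpha/(\alpha+1)$, not $1/(\alpha+1)$. The conclusion your computation actually delivers is $(\mathcal{E}_{\alpha,r}(E,F))^{\theta}=(F,E)^{K}_{\theta,q}=(E,F)^{K}_{1-\theta,q}$ with $\theta=1/(\alpha+1)$, i.e.\ the couple must be ordered with the approximating space first --- which is exactly how the theorem is later invoked (the couple appears as $(\mathcal{T},E)$ in the proof of Theorem \ref{A-I-equivalence}). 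As written, your argument would ``confirm'' exponents inconsistent with definition (\ref{K}); you need to carry out the balancing computation explicitly and prove the statement for the correctly ordered couple. Finally, drop the proposed ``cross-check via the Reiteration Theorem'': the paper's use of reiteration presupposes this very equivalence, so that check is circular.
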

The following important result is known as the Power Theorem (see
\cite{BL}, Ch. 7).
\begin{theorem}
Suppose that the following relations satisfied:  $\nu=\eta
\rho_{1}/\rho,$  $\rho=(1-\eta)\rho_{0}+\eta \rho_{1}, $ and
$q=\rho r$ for $ \rho_{0}>0, \rho_{1}>0.$ Then, if $0<\eta< 1, 0<
r\leq \infty,$ the following equality holds true
$$
\left((E)^{\rho_{0}}, (F)^{\rho_{1}}\right)^{K}_{\eta,
r}=\left((E,F)^{K}_{\nu, q}\right)^{\rho}.
$$
\end{theorem}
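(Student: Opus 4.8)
The plan is to reduce the identity of the two spaces to a pointwise comparison, valid for each fixed $f \in E+F$, between the $K$-functional of the power couple $((E)^{\rho_0},(F)^{\rho_1})$ and a power of the $K$-functional of the original couple $(E,F)$, and then to transport this comparison through the defining integral by a change of the scale variable. Write $\phi(s)=K(f,s;E,F)$. Being an infimum of the affine functions $s\mapsto\|f_0\|_E+s\|f_1\|_F$, the function $\phi$ is nondecreasing and concave on $(0,\infty)$, so that its logarithmic derivative, where it exists, satisfies $0\le s\phi'(s)/\phi(s)\le 1$; this single structural fact is what ultimately makes the change of variables harmless. Since $F$ embeds continuously into $E$ in all cases of interest, one may truncate the integrals to a finite interval, but the argument is insensitive to this.

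\emph{Upper bound.} For each $s>0$ I would choose a near-optimal decomposition $f=f_0+f_1$ for $\phi(s)$, so that $\|f_0\|_E\le 2\phi(s)$ and $\|f_1\|_F\le 2\phi(s)/s$. Inserting this same decomposition into the definition of $K(f,t;(E)^{\rho_0},(F)^{\rho_1})$ and balancing the two resulting terms $\|f_0\|_E^{\rho_0}$ and $t\|f_1\|_F^{\rho_1}$ forces the choice
$$t=t(s)=s^{\rho_1}\phi(s)^{\rho_0-\rho_1},$$
for which one obtains $K(f,t(s);(E)^{\rho_0},(F)^{\rho_1})\le C\,\phi(s)^{\rho_0}$. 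The exponent here is $\rho_0$, not $\rho$, simply because the first slot carries the norm $\|\cdot\|_E^{\rho_0}$; the parameter $\rho$ will enter only through the exponent bookkeeping below.

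\emph{Lower bound and the bijection.} Differentiating $\log t(s)=\rho_1\log s+(\rho_0-\rho_1)\log\phi(s)$ and using $0\le s\phi'(s)/\phi(s)\le 1$ shows that $d\log t/d\log s$ lies between $\min(\rho_0,\rho_1)$ and $\max(\rho_0,\rho_1)$; hence $s\mapsto t(s)$ is a strictly increasing bijection of $(0,\infty)$, and crucially $dt/t\sim ds/s$ with constants depending only on $\rho_0,\rho_1$. For the reverse inequality I would fix $t$, let $s=s(t)$ be the inverse, take a near-optimal decomposition for $K(f,t;(E)^{\rho_0},(F)^{\rho_1})$, and insert it into $\phi(s)\le\|f_0\|_E+s\|f_1\|_F$. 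Setting $M=K(f,t;(E)^{\rho_0},(F)^{\rho_1})$ and $P=\phi(s)$ leads to an inequality of the shape $P\le c_1M^{1/\rho_0}+c_2M^{1/\rho_1}P^{\,1-\rho_0/\rho_1}$; a single application of Young's inequality absorbs the last term into $\tfrac12 P$ and gives $\phi(s)^{\rho_0}\le C\,M$, matching the upper bound. This step is the main obstacle: unlike the upper bound, the lower bound cannot come from an arbitrary decomposition, since a degenerate one with $\|f_0\|_E=0$ would destroy any product estimate, so one must exploit the bijection $t\leftrightarrow s$ together with the absorption. (When $\rho_0=\rho_1$ the estimate follows immediately from the quasi-triangle inequality for the power quasi-norm and no absorption is needed.)

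\emph{Change of variables and identification of exponents.} Combining the two bounds yields $K(f,t(s);(E)^{\rho_0},(F)^{\rho_1})\sim\phi(s)^{\rho_0}$. Substituting $t=t(s)$ in
$$\|f\|_{((E)^{\rho_0},(F)^{\rho_1})^{K}_{\eta,r}}^{r}=\int_{0}^{\infty}\bigl(t^{-\eta}K(f,t;(E)^{\rho_0},(F)^{\rho_1})\bigr)^{r}\frac{dt}{t},$$
and using $t(s)^{-\eta}=s^{-\rho_1\eta}\phi(s)^{-(\rho_0-\rho_1)\eta}$ together with $dt/t\sim ds/s$, the exponent of $\phi(s)$ becomes $\rho_0 r-(\rho_0-\rho_1)\eta r=r[(1-\eta)\rho_0+\eta\rho_1]=\rho r=q$, while the exponent of $s$ becomes $\rho_1\eta r=\nu q$, precisely because $\nu=\eta\rho_1/\rho$. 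Thus the integral is equivalent to $\int_{0}^{\infty}(s^{-\nu}\phi(s))^{q}\,ds/s=\|f\|_{(E,F)^{K}_{\nu,q}}^{q}$, and taking $r$-th roots with $q/r=\rho$ gives $\|f\|_{((E)^{\rho_0},(F)^{\rho_1})^{K}_{\eta,r}}\sim\|f\|_{(E,F)^{K}_{\nu,q}}^{\rho}$, which is the asserted equality of quasi-normed spaces; the case $r=\infty$ is identical with the integral replaced by a supremum. Since all estimates are stated up to multiplicative constants, the quasi-norm constants $C_E,C_F$ never obstruct the conclusion.
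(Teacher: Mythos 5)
The paper does not prove this statement at all: it is quoted as the classical Power Theorem with a pointer to Bergh--L\"ofstr\"om, so there is no in-paper argument to compare against. Your proposal is, in substance, a correct reconstruction of the standard proof from that reference. The two load-bearing ingredients are exactly the right ones: the pointwise equivalence $K\bigl(f,t(s);(E)^{\rho_0},(F)^{\rho_1}\bigr)\sim K(f,s;E,F)^{\rho_0}$ under the substitution $t(s)=s^{\rho_1}\phi(s)^{\rho_0-\rho_1}$, and the observation that concavity and monotonicity of $\phi$ force $d\log t/d\log s\in[\min(\rho_0,\rho_1),\max(\rho_0,\rho_1)]$, so that $s\mapsto t(s)$ is a bijection of $(0,\infty)$ and $dt/t\sim ds/s$. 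Your exponent bookkeeping at the end ($\rho_0 r-(\rho_0-\rho_1)\eta r=\rho r=q$ and $\rho_1\eta r=\nu q$) is correct and delivers the stated identity with $q/r=\rho$.

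One spot is stated more glibly than it deserves: the absorption in the lower bound. The inequality $P\le c_1M^{1/\rho_0}+c_2M^{1/\rho_1}P^{1-\rho_0/\rho_1}$ yields $P^{\rho_0}\le CM$ by a single weighted Young inequality only when $\rho_0<\rho_1$, since then both exponents $\rho_0/\rho_1$ and $1-\rho_0/\rho_1$ lie in $(0,1)$. When $\rho_0>\rho_1$ the exponent $1-\rho_0/\rho_1$ is negative and you must first multiply through by $P^{\rho_0/\rho_1-1}$ and absorb the term $M^{1/\rho_0}P^{\rho_0/\rho_1-1}$ into $P^{\rho_0/\rho_1}$ instead (the conjugate exponents $\rho_1/\rho_0$ and $1-\rho_1/\rho_0$ then do the job); the case $\rho_0=\rho_1$ is trivial as you note. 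This is a presentational wrinkle, not a gap. If you wanted to shorten the argument further, you could follow Bergh--L\"ofstr\"om and work with the sup-form functional $K_\infty(t,f)=\inf\max(\|f_0\|^{\rho_0}_E,t\|f_1\|^{\rho_1}_F)$, for which the identity $K_\infty(t(s),f;(E)^{\rho_0},(F)^{\rho_1})=K_\infty(s,f;E,F)^{\rho_0}$ is essentially exact and the two-sided estimate with absorption is not needed; the equivalence $K_\infty\le K\le 2K_\infty$ (used elsewhere in this paper, in the proof of Theorem 3.6) then converts it into your statement.
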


The Theorem we prove next represents a very abstract version of what is known as Direct and Inverse Approximation Theorems.

\begin{theorem}\label{A-I-equivalence}
 Suppose that $\mathcal{T}\subset F\subset E$ are quasi-normed
linear spaces and $E$ and $F$ are complete.
If there
exist $C>0$ and $\beta >0$ such that for any $f\in F$ the
following Jackson-type inequality is satisfied
\begin{equation}
t^{\beta}\mathcal{E}(t,f,\mathcal{T},E)\leq C\|f\|_{F},\label{dir}
t>0,
\end{equation}
 then the following embedding holds true
\begin{equation}\label{embed-2}
(E,F)^{K}_{\theta,q}\subset \mathcal{E}_{\theta\beta,q}(E,
\mathcal{T}), \>0<\theta<1, \>0<q\leq \infty.
\end{equation}

If there exist $C>0$ and $\beta>0$ such that for any $f\in \mathcal{T}$ the following Bernstein-type inequality holds
\begin{equation}\label{bern-1}
\|f\|_{F}\leq C\|f\|^{\beta}_{\mathcal{T}}\|f\|_{E},
\end{equation}
then 
\begin{equation}
\mathcal{E}_{\theta\beta, q}(E, \mathcal{T})\subset (F, F)^{K}_{\theta, q}, , \>0<\theta<1, \>0<q\leq \infty.
\end{equation}
\end{theorem}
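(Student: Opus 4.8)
The plan is to prove the two embeddings separately, as an abstract Jackson (direct) and an abstract Bernstein (inverse) theorem. Throughout I use two structural facts: since $F$ is continuously embedded in $E$, the $K$-functional may be written $K(f,t,E,F)=\inf_{g\in F}\left(\|f-g\|_E+t\|g\|_F\right)$; and since $0\in\mathcal{T}$, the approximation functional satisfies $\mathcal{E}(f,s,E,\mathcal{T})\le\|f\|_E$ for every $s>0$. I read the hypothesis (\ref{dir}) as the statement $s^{\beta}\mathcal{E}(f,s,E,\mathcal{T})\le C\|f\|_F$ for the functional $\mathcal{E}(f,\cdot,E,\mathcal{T})$ introduced just above the theorem.

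\emph{Direct part.} The decisive step is to dominate the approximation functional by the $K$-functional. Fix $f\in E$ and an arbitrary $g\in F$. Applying (\ref{dir}) to $g$, I choose $h\in\mathcal{T}$ with $\|h\|_{\mathcal{T}}\le s$ and $\|g-h\|_E\le 2\mathcal{E}(g,s,E,\mathcal{T})\le 2Cs^{-\beta}\|g\|_F$; the quasi-triangle inequality in $E$ then gives $\|f-h\|_E\le C_E\left(\|f-g\|_E+2Cs^{-\beta}\|g\|_F\right)$. Since $\|h\|_{\mathcal{T}}\le s$, the left side bounds $\mathcal{E}(f,s,E,\mathcal{T})$, and taking the infimum over $g\in F$ yields
$$\mathcal{E}(f,s,E,\mathcal{T})\le C_E\,K\!\left(f,2Cs^{-\beta},E,F\right).$$
Substituting this into the defining quasi-norm of $\mathcal{E}_{\theta\beta,q}(E,\mathcal{T})$ and changing variables by $t=2Cs^{-\beta}$, so that $s^{\theta\beta}=(2C)^{\theta}t^{-\theta}$ and $ds/s=-\beta^{-1}\,dt/t$, carries the approximation integral exactly onto the interpolation integral (\ref{Knorm}). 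This gives $\|f\|_{\mathcal{E}_{\theta\beta,q}(E,\mathcal{T})}\le C\|f\|_{(E,F)^K_{\theta,q}}$, hence the embedding (\ref{embed-2}); the case $q=\infty$ follows by replacing the integrals with suprema.

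\emph{Inverse part.} Here the target space should read $(E,F)^K_{\theta,q}$, so that together with the direct part one obtains the norm equivalence $\mathcal{E}_{\theta\beta,q}(E,\mathcal{T})=(E,F)^K_{\theta,q}$. Given $f$ in the approximation space, I pick near-best approximants $g_k\in\mathcal{T}$ with $\|g_k\|_{\mathcal{T}}\le 2^{k}$ and $\|f-g_k\|_E\le 2E_k$, where $E_k:=\mathcal{E}(f,2^{k},E,\mathcal{T})$; since $f$ lies in the approximation space, $E_k$ decreases to $0$, so $g_k\to f$ in $E$. For the telescoping differences $d_k=g_{k+1}-g_k\in\mathcal{T}$ the quasi-triangle inequalities give $\|d_k\|_{\mathcal{T}}\lesssim 2^{k}$ and $\|d_k\|_E\lesssim E_k$, whence the Bernstein hypothesis (\ref{bern-1}) produces $\|d_k\|_F\le C\|d_k\|_{\mathcal{T}}^{\beta}\|d_k\|_E\lesssim 2^{k\beta}E_k$. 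Using $g_m=g_0+\sum_{k<m}d_k$ and the splitting $f=(f-g_m)+g_m$, I estimate $K(f,t,E,F)\le\|f-g_m\|_E+t\|g_m\|_F\lesssim E_m+t\|g_m\|_F$, then choose $t\sim 2^{-m\beta}$ and sum the resulting quantities over the dyadic scales to recover the interpolation quasi-norm (\ref{Knorm}).

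\emph{Main obstacle.} The real work is the quasi-norm bookkeeping in the inverse part. Because $\|\cdot\|_F$ is merely a quasi-norm, the telescoping series must be summed via the Aoki--Rolewicz $\rho$-inequality $\|g_m\|_F^{\rho}\le\|g_0\|_F^{\rho}+\sum_{k<m}\|d_k\|_F^{\rho}$ for a suitable $\rho\in(0,1]$, with the base term controlled by $\|g_0\|_F\lesssim\|f\|_E$ from Bernstein. The contribution $\sum_m\bigl(2^{-m(1-\theta)\beta}\|g_m\|_F\bigr)^{q}$ arising from the term $t\|g_m\|_F$ is then reduced to $\sum_k\bigl(2^{k\theta\beta}E_k\bigr)^{q}\sim\|f\|_{\mathcal{E}_{\theta\beta,q}(E,\mathcal{T})}^{q}$ by a discrete Hardy inequality; the geometric weights $2^{-m(1-\theta)\beta}$ are summable precisely because $0<\theta<1$ and $\beta>0$, which is exactly where these restrictions enter. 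Finally one treats $q=\infty$ with suprema in place of sums.
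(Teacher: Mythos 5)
Your proposal is correct, but it proves the theorem by a genuinely different route than the paper. The paper works entirely at the level of abstract interpolation machinery: from the Jackson inequality it derives $K(f,s,\mathcal{T},E)\leq Cs^{\frac{1}{1+\beta}}\|f\|_{F}^{\frac{1}{1+\beta}}$, i.e.\ membership of $(F)^{\frac{1}{1+\beta}}$ in the class $\mathcal{K}(\tfrac{1}{1+\beta},\mathcal{T},E)$ together with $E\in\mathcal{K}(1,\mathcal{T},E)$, and then chains the Reiteration Theorem, the Power Theorem, and the cited identity $(\mathcal{E}_{\alpha,r}(E,F))^{\theta}=(E,F)^{K}_{\theta,q}$ to land on (\ref{embed-2}); the Bernstein direction is handled symmetrically via the $\mathcal{J}$-class half of reiteration. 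You instead give the classical Butzer--Scherer/DeVore--Lorentz style argument by hand: for the direct part the pointwise domination $\mathcal{E}(f,s,E,\mathcal{T})\leq C_{E}\,K(f,2Cs^{-\beta},E,F)$ followed by the substitution $t=2Cs^{-\beta}$ (which does carry one quasi-norm exactly onto the other), and for the inverse part dyadic near-best approximants, telescoping differences fed into (\ref{bern-1}), the Aoki--Rolewicz $\rho$-inequality to sum in the quasi-normed space $F$, and a discrete Hardy inequality whose geometric weights converge precisely because $0<\theta<1$ and $\beta>0$. Both arguments are sound; yours is self-contained and makes the mechanism visible (at the cost of the bookkeeping you acknowledge), while the paper's is shorter but delegates all the real work to the three quoted theorems from \cite{BL}. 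You are also right that the target space in the second embedding should read $(E,F)^{K}_{\theta,q}$ rather than $(F,F)^{K}_{\theta,q}$; that is a typo in the statement, and your reading is the one consistent with the paper's own proof and with Theorem \ref{contapproxim}. Two small points worth tightening if you write this up: the near-best approximant with factor $2$ needs the usual caveat when the infimum is $0$, and the convergence $g_{k}\to f$ in $E$ is not actually needed (and can fail for $q=\infty$), since your $K$-functional estimate only uses the finite splittings $f=(f-g_{m})+g_{m}$.
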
\label{intthm}

\begin{proof}

It is known (\cite{BL}, Ch.7)  that for any $s>0$, for
\begin{equation}
t=K_{\infty}(f,
s)=K_{\infty}(f,s,\mathcal{T},E)=\inf_{f=f_{1}+f_{2}, f_{1}\in
\mathcal{T}, f_{2}\in E}\max(\|f_{1}\|_{\mathcal{T}},
s\|f_{2}\|_{E})\label{t}
\end{equation}
the following inequality holds
\begin{equation}
s^{-1}K_{\infty}(f, s)\leq \lim_{\tau\rightarrow
t-0}\inf\mathcal{E}(f, \tau,E,\mathcal{T})\label{lim}.
\end{equation}
Since
\begin{equation}
K_{\infty}(f,s)\leq K(f, s)\leq 2K_{\infty}(f, s),\label{equiv}
\end{equation}
the Jackson-type inequality (\ref{dir}) and the inequality
 (\ref{lim}) imply
\begin{equation}
s^{-1}K(f,s,\mathcal{T},E)\leq Ct^{-\beta}\|f\|_{F}.
\end{equation}
The equality (\ref{t}), and inequality (\ref{equiv}) imply the
estimate
\begin{equation}
t^{-\beta}\leq 2^{\beta}
\left(K(f,s,\mathcal{T},E)\right)^{-\beta}
\end{equation}
which along with the previous inequality gives the estimate
$$
K^{1+\beta}(f,s,\mathcal{T},E)\leq C s \|f\|_{F}
$$
which in turn imply the inequality
\begin{equation}
K(f,s,\mathcal{T},E)\leq C s^{\frac{1}{1+\beta}}
\|f\|_{F}^{\frac{1}{1+\beta}}.\label{class1}
\end{equation}
At the same time one has
\begin{equation}
K(f, s,\mathcal{T},E)= \inf_{f=f_{0}+f_{1},f_{0}\in \mathcal{T},
f_{1}\in E}\left(\|f_{0}\|_{\mathcal{T}}+s\|f_{1}\|_{E}\right)\leq
s\|f\|_{E},\label{class2}
\end{equation}
for every $f$ in $E$. The inequality (\ref{class1}) means that the
quasi-normed linear space $(F)^{\frac{1}{1+\beta}}$ belongs to the
class $\mathcal{K}(\frac{1}{1+\beta}, \mathcal{T}, E)$ and
(\ref{class2}) means that the quasi-normed linear space $E$
belongs to the class $\mathcal{K}(1, \mathcal{T}, E)$. This fact
allows us to use the Reiteration Theorem to obtain the embedding
\begin{equation}
\left((F)^{\frac{1}{1+\beta}},E\right)^{K}_{\frac{1-\theta}{1+\theta
\beta},q(1+\theta \beta)}\subset \left(\mathcal{T},
E\right)^{K}_{\frac{1}{1+\theta \beta},q(1+\theta \beta)}
\end{equation}
for every $0<\theta<1, 1<q<\infty$. But the space on the left is
the space
$$
\left(E,(F)^{\frac{1}{1+\beta}}\right)^{K}_{\frac{\theta(1+\beta)}{1+\theta
\beta},q(1+\theta \beta)},
$$
which according to the Power Theorem is the space
$$
\left((E,F)^{K}_{\theta, q}\right)^{\frac{1}{1+\theta \beta}}.
$$
All these results along with the equivalence of  interpolation and
approximation spaces give  the embedding
$$
\left(E,F\right)^{K}_{\theta,q}\subset \left(\left(\mathcal{T},
E\right)^{K}_{\frac{1}{1+\theta \beta},q(1+\theta
\beta)}\right)^{1+\theta \beta}=\mathcal{E}_{\theta \beta,
q}(E,\mathcal{T}),
$$
which proves  the embedding (\ref{embed-2}). 
Conversely, if the Bernstein-type  inequality (\ref{bern-1}) holds then one has the inequality
\begin{equation}\label{bern-1}
\|f\|_{F}^{\frac{1}{1+\beta}}\leq C\|f\|^{\frac{\beta}{1+\beta}}_{\mathcal{T}}\|f\|_{E}^{\frac{1}{1+\beta}}.
\end{equation}
Along with obvious equality $\|f\|_{E}=\|f\|^{0}_{\mathcal{T}}\|f\|_{E}$ and the Iteration Theorem one obtains the embedding
$$ \left(\mathcal{T},
E\right)^{K}_{\frac{1}{1+\theta \beta},q(1+\theta
\beta)}\subset \left((F)^{\frac{1}{1+\beta}},E\right)^{K}_{\frac{1-\theta}{1+\theta
\beta},q(1+\theta \beta)}.
$$
To finish the proof of the theorem one can use the same arguments as above. Theorem is proven.

\end{proof}

\section{Approximation  in spaces $L_{p}(\mathbf{M}),\>\>1\leq p\leq \infty$}

In this section we are going to prove Theorem \ref{ApproxTh} by applying relations between Interpolation and approximation spaces described in Section \ref{interp}.

For $1 \leq p \leq \infty$, if $f \in L_p({\bf M})$, we let
\begin{equation}
\mathcal{E}(f,\omega,p)=\inf_{g\in
\textbf{E}_{\omega}(L)}\|f-g\|_{L_{p}({\bf M})}.
\end{equation}

\subsection{The Jackson inequality}

\begin{lemma}\label{Jack}
For every $k\in \mathbb{N}$ there exists a constant $C(k)$ such that for any $\omega>1$
$$
\mathcal{E}(f,\omega,p)\leq C(k)\omega^{-k}\|L^{k/2}f\|_{L_{p}({\bf M})}, \>\>\>f\in  W_{p}^{k}(\mathbf{M}).
$$

\end{lemma}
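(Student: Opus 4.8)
The plan is to exhibit a concrete near-best approximant in $\mathbf{E}_\omega(L)$ by a smooth spectral cutoff and then to estimate the remainder by extracting from it exactly the power $L^{k/2}$ that the hypothesis $f\in W_p^k(\mathbf{M})$ controls. First I would fix $F\in C_0^\infty(\mathbf{R})$ with $F\equiv 1$ on a neighbourhood of the origin and with compact support, and take the scale $t=\omega^{-1}$, with the support of $F$ and the scale calibrated together so that the operator $F(t^2L)$ reproduces precisely the eigenfunctions spanning $\mathbf{E}_\omega(L)$ and annihilates the rest. Then $g:=F(t^2L)f$ is a finite linear combination of eigenfunctions and so lies in $\mathbf{E}_\omega(L)$; since $F(t^2L)$ is given by the kernel $K^F_t$ of Theorem \ref{nrdglc} it may be applied to any $f\in L_p(\mathbf{M})$, and therefore
$$\mathcal{E}(f,\omega,p)\le \|f-g\|_{L_p(\mathbf{M})}=\big\|(I-F)(t^2L)f\big\|_{L_p(\mathbf{M})}.$$

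Next I would factor the remainder multiplier so that the full power of $\omega$ is displayed. Because $1-F$ vanishes on a neighbourhood of $0$, the function $H(u):=(1-F(u))\,u^{-k/2}$ extends to a smooth function on $[0,\infty)$ with no singularity at the origin and with $H(u)=u^{-k/2}$ for large $u$; consequently the operator identity
$$(I-F)(t^2L)=H(t^2L)\,(t^2L)^{k/2}=t^{k}\,H(t^2L)\,L^{k/2}$$
holds on distributions. Inserting $t=\omega^{-1}$ and using that $f\in W_p^k(\mathbf{M})$ means $L^{k/2}f\in L_p(\mathbf{M})$, this gives
$$\mathcal{E}(f,\omega,p)\le \big\|H(t^2L)\big\|_{L_p\to L_p}\;\omega^{-k}\,\big\|L^{k/2}f\big\|_{L_p(\mathbf{M})},$$
so that the Lemma follows at once provided $\|H(t^2L)\|_{L_p\to L_p}$ is bounded uniformly for $0<t\le 1$.

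That uniform multiplier bound is the main obstacle, because $H$ is not compactly supported and Theorem \ref{boundness} does not apply to it directly. I would overcome this with a Littlewood--Paley decomposition $H=\sum_{j\ge 0}H_j$ into pieces $H_j\in C_0^\infty(\mathbf{R})$ supported in dyadic shells $\{u\sim 2^j\}$, for which the decay $H(u)=u^{-k/2}$ forces $\|H_j\|_{C^N(\mathbf{R})}\lesssim 2^{-jk/2}$. Rescaling each shell back to unit scale and applying the $L_p\to L_p$ case of Theorem \ref{boundness} (whose constant is controlled by the $C^N$-norm of the symbol) bounds $\|H_j(t^2L)\|_{L_p\to L_p}$ by a multiple of $2^{-jk/2}$, and summing the resulting geometrically decreasing series in $j$ yields $\|H(t^2L)\|_{L_p\to L_p}\le C(k)$ uniformly in $t\in(0,1]$. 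The one point requiring care is the range of large $j$ for which the rescaled scale exceeds $1$ and Theorem \ref{boundness} cannot be invoked verbatim; there I would rely on the strong factor $2^{-jk/2}$ together with a crude frequency-localized $L_p$-bound to keep the tail summable. Combining the cutoff construction, the factorization identity, and this uniform multiplier estimate completes the proof of the Jackson inequality.
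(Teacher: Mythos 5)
Your proposal is, at its core, the same argument as the paper's, repackaged: the paper takes $h\equiv 1$ on $[0,1]$, writes $f-h\left(\frac{4}{\omega}L\right)f=\sum_{j\geq 1}F_{j}\left(\frac{4}{\omega}L\right)f$ for a dyadic partition of unity $\{F_j\}$, and uses the identity $F_j(\lambda)=2^{-(j-1)k}\Psi_j(\lambda)\lambda^{k/2}$ to extract $L^{k/2}$ together with a geometric factor, then applies Theorem \ref{boundness} to each $\Psi_j$ uniformly and sums. Your decomposition of the tail multiplier $H(u)=(1-F(u))u^{-k/2}$ into dyadic pieces $H_j$ with $\|H_j\|_{C^N}$ geometrically small is literally that same decomposition (in the paper's notation $H_j=2^{-(j-1)k}\Psi_j$), so the two proofs coincide step for step; only the bookkeeping (partition of unity applied to $f$ versus Littlewood--Paley pieces of $H$) differs.

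There are, however, two slips to fix. First, the calibration $t=\omega^{-1}$ is incompatible with your requirement that $F(t^{2}L)f\in \mathbf{E}_{\omega}(L)$: under the paper's definition, $\mathbf{E}_{\omega}(L)$ is spanned by eigenfunctions with eigenvalue of $L$ at most $\omega$, so if $\mathrm{supp}\, F\subset [0,b]$ then $F(t^{2}L)f$ has spectrum in $[0,b/t^{2}]=[0,b\omega^{2}]$, far outside $[0,\omega]$. To land in $\mathbf{E}_{\omega}(L)$ you must take $t^{2}\asymp \omega^{-1}$ (the paper uses $t^{2}=4/\omega$ with $\mathrm{supp}\,h\subset[0,4]$), and then your factorization $(I-F)(t^{2}L)=t^{k}H(t^{2}L)L^{k/2}$ produces $t^{k}\asymp\omega^{-k/2}$, i.e.\ the bound $\mathcal{E}(f,\omega,p)\leq C(k)\,\omega^{-k/2}\|L^{k/2}f\|_{L_p(\mathbf{M})}$ --- which is exactly where the paper's own proof ends (its final display has $\omega^{-k/2}$, not the $\omega^{-k}$ of the statement; the exponent $-k$ corresponds to the normalization in which the band is measured by $\sqrt{\lambda}\leq\omega$, the normalization also visible in the $\omega^{2k}$ of Lemma \ref{BernLemma} and in the nodes $\mathcal{E}(f,2^{2j},p)$ of Theorem \ref{ApproxTh}). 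Relatedly, no smooth compactly supported $F$ can ``reproduce precisely'' $\mathbf{E}_{\omega}(L)$ and ``annihilate the rest''; fortunately you never use that --- all the argument needs is $F\equiv 1$ near $0$ and $\mathrm{supp}\,F(t^{2}\,\cdot)\subset[0,\omega]$. Second, your caveat about large $j$ is backwards, and the ``crude frequency-localized bound'' is unnecessary: $H_j$ lives on the shell $u\sim 2^{j}$, so writing $H_j(t^{2}L)=G_j(s_j^{2}L)$ with $G_j$ supported at unit scale gives $s_j=t\,2^{-j/2}\leq t\leq 1$, \emph{decreasing} in $j$; hence Theorem \ref{boundness} applies verbatim and uniformly to every shell (this is the paper's $\Psi_j\left(\frac{4}{\omega}L\right)=\Psi\left(\frac{4}{4^{j-1}\omega}L\right)$), and combined with $\|G_j\|_{C^N}\lesssim 2^{-jk/2}$ the series sums with no further device. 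With these repairs your proof is complete and agrees with the paper's.
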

\begin{proof}

Let $h$ be a $C^{\infty}$ function on $[0,\infty)$ which equals $1$ on $[0,1]$,
and which is supported in $[0,4]$.  Define, for $\lambda > 0$,
\[ F(\lambda) = h(\lambda/4) - h(\lambda)\]
so that $F$ is supported in $[1,16]$.  For $j \geq 1$, we set 
\[ F_j(\lambda) = F(\lambda/4^{j-1}).\] 
We also set
$F_0 = h$, so that $\sum_{j=0}^{\infty} F_j \equiv 1$. 
For $\lambda > 0$ we define 
\[ \Psi(\lambda) = F(\lambda)/\lambda^{k/2} \]
so that $\Psi$ is supported in $[1,16]$.  For $j \geq 1$, we set 
\[ \Psi_j(\lambda)=\Psi(\lambda/4^{j-1}),\] 
so that 
$$
F_j(\lambda) = 2^{-(j-1)k}\Psi_j(\lambda)\lambda^{k/2}.
$$ 
Now for a given $\omega$ we change the variable $\lambda$ to the variable $4\lambda/\omega$. Clearly, the support of $h(4\lambda/\omega)$ is the interval $[0, \>\omega]$ and we have the following relation
$$
F_j(4\lambda/\omega) = \omega^{-k/2}2^{-(j-2)k}\Psi_j(4\lambda/\omega)\lambda^{k/2}.
$$
It implies that if $f$ is a distribution on ${\bf M}$ then
\[ 
F_j\left(\frac{4}{\omega}L\right)f =  \omega^{-k/2}2^{-(j-2)k}\Psi_j\left(\frac{4}{\omega}L\right)(L^{k/2}f), 
\]
in the sense of distributions.   
If now $f \in  W_p^k(\mathbf{M})$, so that $L^{k/2}f \in L_p(\mathbf{M})$, we see that for $\alpha'=\infty$ according to Theorem \ref{boundness}
$$
\left\|\Psi_j\left(\frac{4}{\omega}L\right)(L^{k/2}f)\right\|_{L_{p}({\bf M})}\leq C^{'}(k)\|L^{k/2}f\|_{L_{p}({\bf M})}, \>\>\> 
$$
which implies the inequality
$$
\mathcal{E}(f,\omega,p)\leq\left\|f-h\left(\frac{4}{\omega}L\right)f\right\|_{L_{p}({\bf M})}\leq \sum_{j\geq 1}\left\|F_{j}\left(\frac{4}{\omega}L\right)f\right\|_{L_{p}({\bf M})}\leq 
$$
$$
C^{'}(k)\omega^{-k/2}\sum_{j\geq 1}2^{-(j-2)k}\|L^{k/2}f\|_{L_{p}({\bf M})}\leq C(k)\omega^{-k/2}\|L^{k/2}f\|_{L_{p}({\bf M})}.
$$
The proof is complete.

\end{proof}

\subsection{The Bernstein inequality}

\begin{lemma}\label{BernLemma} There exists a constant $c(k)$ such that for all $f\in \textbf{E}_{\omega}(L)$
\begin{equation}\label{NI-1}
\|L^{k}f\|_{L_{p}({\bf M})}\leq C(k)\omega^{2k}\|f\|_{L_{p}({\bf M})}.
\end{equation}

\end{lemma}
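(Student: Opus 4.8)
The plan is to reduce the statement to the uniform $L_p$-boundedness of a single compactly supported smooth spectral multiplier, which is exactly what Theorem \ref{boundness} supplies. For $p=2$ the inequality is immediate: writing $f=\sum_{\lambda_l\le\omega}c_l u_l$, orthonormality of the eigenfunctions gives $\|L^k f\|_{L_2}^2=\sum_{\lambda_l\le\omega}\lambda_l^{2k}|c_l|^2\le\omega^{2k}\|f\|_{L_2}^2$. For general $p$ there is no orthogonality to exploit, so instead I would realize $L^k$, restricted to $\mathbf{E}_\omega(L)$, as a rescaled bandlimited operator of the form $F(t^2 L)$ and then invoke the already-established operator-norm bound.

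Concretely, fix $\chi\in C_0^\infty(\mathbf{R})$ with $\chi\equiv 1$ on $[0,1]$ and $\operatorname{supp}\chi\subset[0,2]$, and set $G(s)=s^k\chi(s)$, so that $G\in C_0^\infty(\mathbf{R})$ and $G(s)=s^k$ for $s\in[0,1]$. For $\omega\ge 1$ put $t=\omega^{-1/2}\in(0,1]$. If $f\in\mathbf{E}_\omega(L)$, then every eigenvalue $\lambda_l$ appearing in $f$ satisfies $t^2\lambda_l=\lambda_l/\omega\le 1$, hence $G(t^2\lambda_l)=(\lambda_l/\omega)^k$ and therefore $G(t^2 L)f=\omega^{-k}L^k f$ exactly. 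Applying Theorem \ref{boundness} in the case $p=q$ (that is $\alpha=1$, $\alpha'=\infty$, so that $t^{-n/\alpha'}=1$) gives $\|G(t^2 L)\|_{L_p(\mathbf{M})\to L_p(\mathbf{M})}\le C$ uniformly for $0<t\le 1$, whence $\|L^k f\|_{L_p}=\omega^{k}\|G(t^2 L)f\|_{L_p}\le C\,\omega^{k}\|f\|_{L_p}$. Since $\omega\ge 1$ this is even stronger than the asserted bound with $\omega^{2k}$; and for $\omega<\lambda_1$ the statement is trivial, as $\mathbf{E}_\omega(L)$ then consists only of constants, on which $L^k$ vanishes.

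The one genuine obstacle is precisely the passage from $p=2$ to general $p$: one cannot estimate $\lambda_l^k\le\omega^k$ term by term without an orthonormal basis, so the truncated power $\lambda^k\mathbf{1}_{[0,\omega]}$ must be repackaged as a bona fide smooth, compactly supported multiplier $G(t^2 L)$. Its uniform $L_p$-boundedness rests on the pointwise kernel estimate of Theorem \ref{nrdglc}, funneled through Corollary \ref{Kerbound} and the Young-type inequality of Lemma \ref{younggen} inside Theorem \ref{boundness}. Choosing the scale $t=\omega^{-1/2}$ (rather than $t=\omega^{-1}$) is what produces the sharp exponent $\omega^{k}$, the stated $\omega^{2k}$ being a convenient overestimate that is amply sufficient for the Bernstein-type hypothesis of Theorem \ref{A-I-equivalence}.
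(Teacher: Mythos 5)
Your proof is correct and follows essentially the same route as the paper: both realize $L^{k}$ on $\mathbf{E}_{\omega}(L)$ as a power of $\omega$ times a smooth, compactly supported spectral multiplier $s^{k}\chi(s)$ evaluated at a rescaled $L$, and then invoke the uniform $L_{p}\to L_{p}$ bound of Theorem \ref{boundness}. The only difference is your choice of scale $t=\omega^{-1/2}$ instead of the paper's $t=\omega^{-1}$ (i.e.\ $H(L)=(\omega^{-2}L)^{k}h(\omega^{-2}L)$), which gives you the sharper factor $\omega^{k}$ in place of $\omega^{2k}$.
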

\begin{proof}
Consider a function $h\in C_{0}^{\infty}(\mathbf{R}_{+})$ such that $h(\lambda)=1$ for $\lambda\in [0,1]$. 
For a fixed $\omega>0$ the support of $h( \lambda/\omega)$ is $[0,\omega]$ and it shows that for any $f\in \textbf{E}_{\omega}(L)$ one has the equality $h( \omega^{-1}L)f=f$.

  According to Theorems  \ref{nrdglc} and \ref{boundness} the operator $H(L)=(\omega^{-2}L)^{k}h(\omega^{-2}L)$ is bounded from $L_{p}(\mathbf{M})$ to $L_{q}(\mathbf{M})$. Thus for every $f\in \textbf{E}_{\omega}(L)$ we have
  $$
  \|L^{k}f\|_{L_{p}({\bf M})}=\omega^{2k} \|(\omega^{-2}L)^{k}h(\omega^{-2}L)f\|_{L_{p}({\bf M})}=
  $$
  $$
  \omega^{2k}\|H(L)f\|_{L_{p}({\bf M})}\leq C(k)\omega^{2k}\|f\|_{L_{p}({\bf M})}.
  $$

\end{proof}

\begin{remark}
In the inequality (\ref{NI-1}) the constant depends on the exponent $k$. One can show \cite{Pes08} that in the case of a compact homogeneous manifold a similar inequality holds with a constant that depends just on the manifold.
\end{remark}

\subsection{Besov spaces and approximations}

It is known  \cite{Tr}  that the Besov space $B^{\alpha}_{p,q}(\bold{M}), k\in \mathbb{N}, 1\leq p<\infty, 0\leq q \leq \infty, $ which was defined in (\ref{Besnorm}), is the interpolation space
$$
B^{\alpha}_{p,q}(\bold{M})=(L_{p}(\bold{M}),W^{r}_{p}(\bold{M}))^{K}_{\alpha/r,q},
$$
 where $K$ is  Peetre's interpolation functor.

Let us compare the situation on manifolds with the abstract conditions of the Theorem \ref{A-I-equivalence}. We treat linear normed spaces $W^{r}_{p}(\bold{M})$ and $L_{p}(\bold{M})$ as the spaces $E$ and $F$ respectively. We identify $\mathcal{T}$ with the linear space $\textbf{E}_{\omega}(L)$ which is equipped with the quasi-norm 
$$
\|f\|_{\mathcal{T}}= \inf_{\omega}\left\{\omega: f\in \textbf{E}_{\omega}(L)\right\},\>\>\>f\in \textbf{E}_{\omega}(L).
$$

Thus, Lemmas \ref{Jack}, \ref{BernLemma} and Theorem \ref{A-I-equivalence} imply the following result. 

\begin{theorem}
\label{contapproxim}
If  $\alpha  > 0$, $1 \leq p \leq \infty$, and $0 < q \leq  \infty$ then
$f \in B^{\alpha }_{p, q}({\bf M})$ if and only if $f \in L_p(\bold{M})$ and
\begin{equation}
\|f\|_{\mathcal{A}^{\alpha }_{p, q}({\bf M})} := 
\|f\|_{L_p(\bold{M})} +
\left(\int_{0}^{\infty}\left(t^{\alpha}\mathcal{E}(f,
t, p)\right)^{q}\frac{dt}{t}\right)^{1/q}<\infty.
\end{equation}
Moreover,
\begin{equation}
\label{errnrmeqway}
\|f\|_{\mathcal{A}^{\alpha }_{p, q}({\bf M})} \sim \|f\|_{B^{\alpha }_{p, q}({\bf M})}.
\end{equation}
\end{theorem}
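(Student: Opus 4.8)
The plan is to establish Theorem \ref{contapproxim} by combining the two one-sided inequalities proved in Lemmas \ref{Jack} and \ref{BernLemma} with the abstract machinery of Theorem \ref{A-I-equivalence}, and then to reconcile the continuous parameter $t$ appearing in the approximation functional with the dyadic sum that defines the Besov norm via the known interpolation description $B^{\alpha}_{p,q}(\mathbf{M})=(L_{p}(\mathbf{M}),W^{r}_{p}(\mathbf{M}))^{K}_{\alpha/r,q}$. First I would fix an integer $r>\alpha$ and set $E=L_{p}(\mathbf{M})$, $F=W^{r}_{p}(\mathbf{M})$, and $\mathcal{T}=\mathbf{E}_{\omega}(L)$ with the quasi-norm $\|f\|_{\mathcal{T}}=\inf\{\omega:f\in\mathbf{E}_{\omega}(L)\}$, exactly as set up in the paragraph preceding the statement. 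The point is that $\mathcal{T}\subset F\subset E$ with both $E$ and $F$ complete, so the hypotheses of Theorem \ref{A-I-equivalence} are available once the Jackson and Bernstein inequalities are cast in the required form.

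\emph{Verifying the Jackson hypothesis.} Lemma \ref{Jack}, applied with $k=r$, gives $\mathcal{E}(f,\omega,p)\leq C\,\omega^{-r}\|L^{r/2}f\|_{L_{p}}$ for $f\in W^{r}_{p}(\mathbf{M})$. Since the domain of $L^{r/2}$ is exactly $W^{r}_{p}(\mathbf{M})$ and $\|L^{r/2}f\|_{L_{p}}\sim\|f\|_{W^{r}_{p}}$, this is precisely the Jackson-type inequality \eqref{dir} of Theorem \ref{A-I-equivalence} with $\beta=r$, provided one matches the approximation functional $\mathcal{E}(f,\omega,p)$ with the abstract $\mathcal{E}(f,t,\mathcal{T},E)$; here the subtle point is that $\|g\|_{\mathcal{T}}\leq t$ means $g\in\mathbf{E}_{t}(L)$, so $\mathcal{E}(f,t,\mathcal{T},E)=\mathcal{E}(f,t,p)$, identifying the two notions. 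This yields the embedding $(E,F)^{K}_{\theta,q}\subset\mathcal{E}_{\theta r,q}(E,\mathcal{T})$.

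\emph{Verifying the Bernstein hypothesis.} For the reverse embedding I would invoke Lemma \ref{BernLemma}: for $f\in\mathbf{E}_{\omega}(L)$ one has $\|L^{r/2}f\|_{L_{p}}\leq C\omega^{r}\|f\|_{L_{p}}$ (taking the appropriate power in \eqref{NI-1}), and since $\|f\|_{\mathcal{T}}=\omega$ this reads $\|f\|_{F}\leq C\|f\|^{r}_{\mathcal{T}}\|f\|_{E}$, which is exactly the Bernstein-type inequality \eqref{bern-1} with the same $\beta=r$. Theorem \ref{A-I-equivalence} then delivers $\mathcal{E}_{\theta r,q}(E,\mathcal{T})\subset(E,F)^{K}_{\theta,q}$, so that the two spaces coincide with equivalent quasi-norms. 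Choosing $\theta=\alpha/r$ makes $\theta r=\alpha$, and the interpolation identity for Besov spaces identifies $(E,F)^{K}_{\alpha/r,q}$ with $B^{\alpha}_{p,q}(\mathbf{M})$, giving $\|f\|_{B^{\alpha}_{p,q}}\sim\|f\|_{L_{p}}+\bigl(\int_{0}^{\infty}(t^{\alpha}\mathcal{E}(f,t,p))^{q}\,\tfrac{dt}{t}\bigr)^{1/q}$.

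\emph{Main obstacle.} I expect the genuine work to lie not in the interpolation bookkeeping but in the careful matching of quasi-norm conventions between the abstract approximation functional and the concrete manifold setting, together with checking that the parameter constraints of the Reiteration and Power Theorems (namely $0<\theta<1$, $0<q<\infty$ for those theorems, versus the claimed range $0<q\leq\infty$) are respected or handled separately; in particular the endpoint $q=\infty$ and the need for $r$ to be an even integer large enough that $\theta=\alpha/r\in(0,1)$ require attention. The continuous-to-dyadic passage — showing that $\int_{0}^{\infty}(t^{\alpha}\mathcal{E}(f,t,p))^{q}\,\tfrac{dt}{t}$ is equivalent to $\sum_{j}(2^{\alpha j}\mathcal{E}(f,2^{2j},p))^{q}$ of Theorem \ref{ApproxTh} — is routine once one uses the monotonicity of $\omega\mapsto\mathcal{E}(f,\omega,p)$, so I would relegate it to a short remark rather than belabor it.
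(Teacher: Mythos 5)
Your proposal follows the paper's own proof essentially verbatim: the paper likewise takes $\mathcal{T}=\mathbf{E}_{\omega}(L)$ with the quasi-norm $\inf\{\omega:\,f\in\mathbf{E}_{\omega}(L)\}$, feeds Lemmas \ref{Jack} and \ref{BernLemma} into Theorem \ref{A-I-equivalence}, identifies $(L_{p}(\mathbf{M}),W^{r}_{p}(\mathbf{M}))^{K}_{\alpha/r,q}$ with $B^{\alpha}_{p,q}(\mathbf{M})$, and relegates the continuous-to-dyadic passage to a remark. Your assignment $E=L_{p}(\mathbf{M})$, $F=W^{r}_{p}(\mathbf{M})$ is the one actually forced by the hypothesis $\mathcal{T}\subset F\subset E$ of Theorem \ref{A-I-equivalence} (the paper's sentence naming the roles in the opposite order is evidently a slip), and your matching of $\beta=r$ between the Jackson and Bernstein inequalities is consistent with the lemmas as stated.
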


By discretizing   the integral term (see \cite{BL})  we obtain  Theorem \ref{ApproxTh}.

 \makeatletter
\renewcommand{\@biblabel}[1]{\hfill#1.}\makeatother

\end{document}